\DeclareMathAlphabet{\mathpzc}{OT1}{pzc}{m}{it}
\def\BZ{\mathbb{Z}}
\def\fB{\mathfrak{B}}
\def\fD{\mathfrak{D}}
\def\fV{\mathfrak{V}}
\def\fa{\mathfrak{a}}
\def\fb{\mathfrak{b}}
\def\fc{\mathfrak{c}}
\def\fd{\mathfrak{d}}
\def\adots{\mathinner{\mkern1mu\raise1.0pt\vbox{\kern7.0pt\hbox{.}}\mkern2mu\raise4.0pt\hbox{.}\mkern2mu\raise7.0pt\hbox{.}\mkern1mu}}
\def\C{\operatorname{C}}
\def\Ext{\operatorname{Ext}}
\def\Hom{\operatorname{Hom}}
\def\nc{\operatorname{nc}}
\def\prod{\operatorname{prod}}
\def\P{\mathcal P}%
\def\C{\mathcal C}%
\def\F{\mathcal F}%
\def\G{\mathcal G}%
\def\W{\mathcal W}%
\newcommand{\Dfn}[1]{\emph{#1}} 
\numberwithin{equation}{section}
\newtheorem{Lemma}{Lemma}[section]
\newtheorem{Theorem}[Lemma]{Theorem}
\newtheorem{Proposition}[Lemma]{Proposition}
\newtheorem{Corollary}[Lemma]{Corollary}
\theoremstyle{definition}
\newtheorem{Definition}[Lemma]{Definition}
\newtheorem{Example}[Lemma]{Example}
\begin{document}

\setlength{\parindent}{0pt}
\setlength{\parskip}{2pt}

\title[Ptolemy diagrams and torsion pairs]{Ptolemy diagrams and
torsion pairs in the cluster categories of Dynkin type D}

\author{Thorsten Holm}
\address{Institut f\"{u}r Algebra, Zahlentheorie und Diskrete
  Mathematik, Fa\-kul\-t\"{a}t f\"{u}r Ma\-the\-ma\-tik und Physik, Leibniz
  Universit\"{a}t Hannover, Welfengarten 1, 30167 Hannover, Germany}
\email{holm@math.uni-hannover.de}
\urladdr{http://www.iazd.uni-hannover.de/\~{ }tholm}

\author{Peter J\o rgensen}
\address{School of Mathematics and Statistics,
Newcastle University, Newcastle upon Tyne NE1 7RU, United Kingdom}
\email{peter.jorgensen@ncl.ac.uk}
\urladdr{http://www.staff.ncl.ac.uk/peter.jorgensen}

\author{Martin Rubey}
\address{Endresstr.\ 59/14, 1230 Wien, Austria}
\email{martin.rubey@math.uni-hannover.de}
\urladdr{http://www.iazd.uni-hannover.de/~rubey}

\thanks{{\em Acknowledgement. }This work has been carried out in the framework 
  of the research priority programme SPP 1388 {\em Darstellungstheorie} of
  the Deutsche Forschungsgemeinschaft (DFG).  We gratefully acknowledge
  financial support through the grants HO 1880/4-1 and HO 1880/5-1. }

\keywords{Clique, cluster algebra, cluster category,
cluster tilting object, Dynkin type D, generating
function, species, triangulated category}

\subjclass[2010]{05A15, 05E15, 13F60, 16G10, 16G70, 18E30}

\begin{abstract} 

  We give a complete classification of torsion pairs in the cluster
  category of Dynkin type $D_n$, via a bijection to new combinatorial 
  objects called Ptolemy diagrams of type $D$. 
  For the latter we give along the way different combinatorial descriptions.  
  One of these allows us to count the number of torsion pairs in 
  the cluster category of type~$D_n$ by providing their
  generating function explicitly.

\end{abstract}

\maketitle

\section{Introduction}
\label{sec:introduction}

Torsion theory is a classic subject 
in algebra. The fundamental example of a torsion pair appears
for abelian groups with the class of torsion abelian groups and 
the class of torsion-free abelian groups as the two entries. For 
arbitrary abelian categories the concept of torsion pairs goes back 
to a paper by Dickson \cite{Dickson} from the mid 1960's. 
Since then torsion theory appeared naturally in various contexts,
in the representation theory of finite dimensional algebras most notably 
in the framework of tilting theory. In recent years the focus of several
modern developments in representation theory has been on derived categories
and related triangulated categories, e.g. stable module categories or
cluster categories. A notion of torsion pairs in triangulated categories 
has been introduced by Iyama and Yoshino \cite{IY}.  

In this paper we will study and classify combinatorially
torsion pairs in cluster categories of Dynkin type $D_n$.  

Cluster categories have been introduced by Buan, Marsh, Reineke,
Reiten and Todorov \cite{BMRRT} as a categorical 
model for Fomin and Zelevinsky's cluster algebras. Roughly speaking,
the indecomposable objects in the cluster category correspond to
the cluster variables and certain direct sums of indecomposable
objects, called cluster tilting objects, then correspond to 
the clusters in the cluster algebras. Most importantly, the 
fundamental mutation operation on clusters in cluster algebras is 
reflected by exchanging summands in cluster tilting objects in the
cluster category. This categorification approach to cluster algebras 
via cluster 
categories has been and still is highly successful in that numerous
important results on cluster algebras have been proven by using 
cluster categories. 

In representation theory, the advent of cluster categories has 
created an entirely new research area, namely cluster tilting theory; 
one of the important aspects of this new theory is that is provides a long-awaited generalization of the classic Bernstein-Bernstein-Gelfand
reflection functors and of the more general APR-tilting. 

Due to the importance of cluster categories in the theory of cluster 
algebras a lot of research goes into understanding the structure of
cluster categories. From the point of view of torsion theory a systematic
study of torsion pairs in cluster categories has only started recently.

In her thesis, Ng \cite{Ng} classified torsion pairs in the cluster 
categories of type $A_{\infty}$. These categories have been
studied in detail in \cite{HJ}; they are generated by a spherical object
and hence fit into the work of Keller, Yang and Zhou \cite{KYZ} 
where it is in particular shown that this category is uniquely determined 
up to triangulated equivalence. 

Ng's classification of torsion pairs in the cluster category of type
$A_{\infty}$ is combinatorial in the sense that she uses a combinatorial
model, namely arcs of the infinity-gon (see \cite{HJ}), to give an
explicit bijection between torsion pairs and certain configurations
of arcs.

Similar configurations, then called Ptolemy diagrams, appeared later also 
in our classification of torsion pairs in cluster categories of Dynkin type 
$A_n$ \cite{HJR-Ptolemy}. Moreover, we enumerated these torsion pairs
and gave an explicit closed formula for the number of torsion pairs
in the cluster category of type $A_n$. 

Classifications and enumerations of torsion pairs have also been 
achieved for cluster categories coming from tubes \cite{HJR-Tubes} (see 
related work of Baur, Buan and Marsh \cite{BBM} on torsion pairs in 
the abelian tube categories).

In the present paper we will provide a complete classification and
enumeration of torsion pairs in the cluster category of Dynkin type
$D_n$. The situation is more complicated than in Dynkin type $A_n$,
caused by the exceptional vertices appearing in a Dynkin diagram of 
type $D$, and hence the corresponding exceptional objects in the 
cluster category.

Our work in this paper will be based on a combinatorial model for 
the cluster category of type $D_n$ which first appeared in a paper 
by Fomin and Zelevinsky \cite{FZ-Ysystems}. There the indecomposable
objects are parametrized by pairs of rotationally symmetric arcs and
by diameters in two colours in a regular $2n$-gon. For a precise
description of this model we refer to Section \ref{sec:model} below. 

Torsion pairs in the cluster category $\mathcal{D}_n$ 
of Dynkin type $D_n$ are pairs $(\mathsf{X},\mathsf{X}^{\perp})$
of subcategories of $\mathcal{D}_n$ closed under direct sums and
direct summands which satisfy the condition 
$\mathsf{X} = ~^{\perp}(\mathsf{X}^{\perp})$; see \cite{IY}.
Here, the perpendicular
subcategories are taken with respect to $\Hom$. 
See Section \ref{sec:torsion} for more details. 

Since the subcategories appearing in a torsion pair are closed
under direct sums and direct summands they are uniquely determined 
by the set of indecomposable objects they contain. For a subcategory
$\mathsf{X}$ let $\mathcal{X}$ be the collection of arcs of the $2n$-gon
corresponding to $\mathsf{X}$ in the Fomin-Zelevinsky model. 

Our first main result gives a combinatorial characterization for
those collections $\mathcal{X}$ corresponding to a subcategory
$\mathsf{X}$ appearing as the first half of a torsion pair in
$\mathcal{D}_n$. To this end we introduce the new notion of 
a {\em Ptolemy diagram of type $D$} in Definition 
\ref{def:PtolemyD}. 

Then we go on to show the following main result in Section 
\ref{sec:Ptolemy}.

\begin{Theorem}
Let $\mathcal{X}$ be a collection of arcs of the $2n$-gon
which is invariant under rotation by 180 degrees, and let
$\mathsf{X}$ be the corresponding subcategory of the cluster
category $\mathcal{D}_n$. 
Then the following conditions are equivalent.
\begin{enumerate}
\item[{(a)}] $(\mathsf{X},\mathsf{X}^{\perp})$ is a torsion
pair in $\mathcal{D}_n$. 
\item[{(b)}] $\mathcal{X}$ is a Ptolemy diagram of type $D$.
\end{enumerate}
\end{Theorem}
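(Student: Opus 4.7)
The plan is to work inside the combinatorial model of Section \ref{sec:model}, where indecomposables of $\mathcal{D}_n$ correspond to rotationally symmetric pairs of arcs and to diameters in two colours of a regular $2n$-gon, and to translate the defining condition $\mathsf{X} = {}^{\perp}(\mathsf{X}^{\perp})$ into a statement about the configuration of arcs $\mathcal{X}$. A preliminary step is to record the combinatorial description of $\Hom_{\mathcal{D}_n}(M,N)$ for indecomposables $M,N$: which pairs of arcs yield nonzero morphisms, and, crucially, which pairs of crossing arcs can be resolved as a non-split extension triangle whose outer terms are again determined combinatorially by the four endpoints. For two ordinary (non-diameter) arcs a single crossing is resolved by the standard "Ptolemy successors", while crossings that involve a diameter or two diameters of different colours must be resolved using the exceptional arcs together with the colour rules built into Definition \ref{def:PtolemyD}.

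For the implication (a) $\Rightarrow$ (b), I would assume that $(\mathsf{X},\mathsf{X}^{\perp})$ is a torsion pair. By \cite{IY}, $\mathsf{X}$ is then closed under extensions and direct summands. Whenever two arcs $a,b \in \mathcal{X}$ cross, the corresponding non-split extension triangle has middle term whose indecomposable summands are exactly the Ptolemy resolution of the crossing; each summand must lie in $\mathsf{X}$, forcing the corresponding arcs into $\mathcal{X}$. Running through the finitely many cases distinguished by whether the two crossing arcs are ordinary, rotationally paired, or diameters of the same or different colours verifies every clause of Definition \ref{def:PtolemyD}.

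For the converse (b) $\Rightarrow$ (a), I would assume that $\mathcal{X}$ is a Ptolemy diagram of type $D$ and check the two ingredients of an Iyama--Yoshino torsion pair: extension closure of $\mathsf{X}$, and the existence, for every indecomposable $c \in \mathcal{D}_n$, of a triangle $x \to c \to y \to \Sigma x$ with $x \in \mathsf{X}$ and $y \in \mathsf{X}^{\perp}$. Extension closure is immediate from the Ptolemy clauses, by the same case analysis used in the first half. To produce the decomposition triangle, I would construct the map $x \to c$ as a maximal right $\mathsf{X}$-approximation by arcs of $\mathcal{X}$ crossing the arc of $c$, complete to a triangle, and show combinatorially that the cone has no arc crossed by any arc of $\mathcal{X}$; this final verification uses the Ptolemy conditions essentially to rule out "hidden" crossings introduced by the cone.

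The principal obstacle is the exceptional combinatorics of diameters. In contrast to type $A_n$, a single diameter supports two distinct indecomposables (one for each colour), and diameters meet every arc through the centre in a degenerate way. Consequently both the formulation of the Ptolemy clauses and the approximation triangles in the previous paragraph have to be handled separately for colour-changing extensions and for interactions between diameters and rotationally paired arcs. This is where the argument genuinely departs from the type $A$ treatment of \cite{HJR-Ptolemy}, and I expect the bookkeeping of the colour rules to be the most delicate part of the proof.
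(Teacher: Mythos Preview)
Your strategy differs substantially from the paper's, and while (a)$\Rightarrow$(b) could probably be completed along your lines with extra work, the plan for (b)$\Rightarrow$(a) has a genuine gap.

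The paper never computes middle terms of extension triangles or cones of approximations. It uses only the dictionary ``$\Ext^1$ between indecomposables is nonzero if and only if their arcs cross'' (Proposition~\ref{prop:Hpm} and the proof of Proposition~\ref{prop:torsion}) to translate the condition $\mathsf{X}={}^{\perp}(\mathsf{X}^{\perp})$ into the purely combinatorial equality $\mathcal{X}=\nc\nc\mathcal{X}$. Both directions of Theorem~\ref{thm:ncnc_vs_Ptolemy} are then elementary arc combinatorics. For (a)$\Rightarrow$(b) one simply observes from Figure~\ref{tab:Ptolemy-D-Condition} that any arc crossing one of the dashed Ptolemy arcs already crosses one of the two given solid arcs of $\mathcal{X}$, so the dashed arc lies in $\nc\nc\mathcal{X}=\mathcal{X}$; this entirely avoids the case-by-case identification of middle terms of exchange triangles at exceptional vertices that your approach would require (and which you have asserted but not established for the colour rules in (Pt2) and (Pt3)). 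The hard direction (b)$\Rightarrow$(a) is carried by Lemmas~\ref{lem:non-diameter} and~\ref{lem:diameter}: assuming an arc lies in $\nc\nc\mathcal{X}\setminus\mathcal{X}$, one repeatedly applies (Pt1)--(Pt3) to arcs of $\mathcal{X}$ that are forced to cross carefully chosen short test arcs near its endpoints, eventually producing a contradiction.

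Your (b)$\Rightarrow$(a) has the following problem. Extension closure is a consequence, not a defining ingredient, of a torsion pair, and on its own does \emph{not} imply $\mathsf{X}={}^{\perp}(\mathsf{X}^{\perp})$. The whole weight therefore rests on showing that the cone $y$ of a right $\mathsf{X}$-approximation of $c$ lies in $\mathsf{X}^{\perp}$, and for this you have supplied no mechanism: you would need to identify the indecomposable summands of $y$ as specific arcs, and this is not available from the long exact sequence together with the Ptolemy clauses alone. There is also a systematic shift slip in your outline: in the paper's model, arcs of $\mathcal{X}$ \emph{crossing} the arc of $c$ parameterise objects $x$ with $\Ext^1(x,c)\neq 0$, i.e.\ $\Hom(x,\Sigma c)\neq 0$, so what you describe is an approximation of $\Sigma c$ rather than of $c$; likewise ``the cone has no arc crossed by any arc of $\mathcal{X}$'' places the cone in $\Sigma^{-1}\mathsf{X}^{\perp}$ rather than in $\mathsf{X}^{\perp}$. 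These shifts are repairable, but they underline that the approximation-cone route needs a concrete combinatorial description of cones in $\mathcal{D}_n$ that you have not provided and that the paper's $\nc\nc$ reformulation deliberately sidesteps.
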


As an application of this combinatorial classification we then deal 
in Section \ref{sec:counting} with the enumeration of torsion pairs in 
$\mathcal{D}_n$. We first give an alternative description of 
Ptolemy diagrams of type $D$ which is closely linked to the 
Ptolemy diagrams of Dynkin type $A$ studied
in our earlier paper \cite{HJR-Ptolemy}. 

This alternative description then allows us to work out the 
generating function for Ptolemy diagrams of type $D$, as an 
explicit expression involving the corresponding generating 
function for torsion pairs in type $A$. The main result of
Section \ref{sec:counting} can be summarized as follows 
(for unexplained notation we refer to Section \ref{sec:counting}). 

\begin{Theorem} For $n\ge 4$ let $\mathcal{D}_n$ be the cluster category
of Dynkin type $D_n$. Then the number of torsion pairs in $\mathcal{D}_n$
is given by the generating function
\begin{eqnarray*}
  \P_D(y) & := & \sum_{n\geq1} \#\{\text{Ptolemy diagrams of type~$D$ 
  of the $2n$-gon}\} y^n \\ 
& = & y\P_A^\prime(y)
\frac{1+12\P_A(y)-\P_A^2(y)-2\P_A^3(y)}{1-2\P_A(y)-\P_A^2(y)}\\
 & = &  y + 16y^2 + 82y^3 + 500y^4 + 3084y^5 + 19400y^6 +\dots\notag
\end{eqnarray*}
where $\P_A(y)$ is the generating function for Ptolemy diagrams of Dynkin
type $A$, as studied in \cite{HJR-Ptolemy}.
\end{Theorem}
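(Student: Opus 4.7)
The plan is to use the alternative description of Ptolemy diagrams of type $D$ developed earlier in Section~\ref{sec:counting}, which expresses such a diagram as a $180^\circ$-symmetric configuration of arcs in the $2n$-gon built out of one or two type-$A$ Ptolemy diagrams glued along a distinguished ``central'' feature. The generating function $\P_A(y)$ is already known from \cite{HJR-Ptolemy}, so the task reduces to a symbolic-method translation of this structural decomposition.

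First I would fix a canonical ``centre'' of the diagram, which is forced by $180^\circ$-symmetry to be one of the following: (i) a diameter or a nested family of diameters through the centre of the $2n$-gon, possibly decorated by one of the two available colours; (ii) a distinguished rotation-invariant pair of non-crossing arcs that together cut the $2n$-gon into two congruent halves; or (iii) no such central feature, in which case the entire configuration is the symmetric doubling of a subdiagram in one half. Each of these cases must be catalogued carefully, using the Ptolemy condition to decide exactly which arcs may coexist with a given central feature and which regions in the complement behave like ordinary type-$A$ Ptolemy diagrams on smaller polygons. In case (i), the several diameters (together with the two-colour choice on them) produce the integer coefficients $12$, $-1$, $-2$ in the numerator; in case (iii), a symmetric doubling of a type-$A$ diagram of size $n$ in an $n$-gon contributes the ``unmarked symmetric'' part. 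The key intermediate step is to show that once the centre is removed, the remaining regions are \emph{independent} type-$A$ Ptolemy diagrams, so that their joint generating function factors as a product of $\P_A$'s.

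The next step is to pass to generating functions. Because the $180^\circ$-symmetry pairs up two isomorphic type-$A$ subdiagrams across the centre, only one of them needs to be ``counted'' while the other is determined; this is the origin of the geometric series $1/(1-2\P_A(y)-\P_A^2(y))$, which enumerates a sequence of concentric symmetric ``annular'' regions each filled with a type-$A$ Ptolemy diagram on either side. The factor $y\P_A'(y)$ arises from pointing at a vertex of the $2n$-gon on a distinguished axis of symmetry: in species terms, marking one vertex converts the cyclic symmetry into a linear one and turns $\P_A(y)$ into $y\P_A'(y)$. Combining these contributions gives an expression in $\P_A$, $y\P_A'$, $y$, and constant weights for the central colourings, which after collecting terms yields the stated rational function; the numerical check against the initial coefficients $1, 16, 82, 500, 3084, 19400$ then serves as verification.

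The hardest step will be the bookkeeping for the ``centre'' of a symmetric Ptolemy diagram: one must enumerate, with the right multiplicities, how many admissible rotation-invariant central configurations there are, taking proper account of the two colours of diameters, of the possibility that several coloured diameters nest inside each other, and of the Ptolemy compatibility between the centre and its surrounding type-$A$ pieces. Getting the polynomial $1+12\P_A-\P_A^2-2\P_A^3$ in the numerator correctly, as opposed to some other polynomial in $\P_A$, will require precisely this case analysis; once the inventory of centres is fixed, the rest of the computation is a standard application of the symbolic method for pointed symmetric structures and the geometric series for the nested annular decomposition.
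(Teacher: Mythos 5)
Your overall strategy is the same as the paper's: decompose a type-$D$ Ptolemy diagram into a $180^\circ$-symmetric central region containing all diameters, glue type-$A$ Ptolemy diagrams onto its bounding edges, prove that no arc crosses the boundary of the central region (so the outer pieces are independent type-$A$ diagrams), and obtain $\P_D(y)=y\P_A'(y)\,\C\bigl(\P_A(y)\bigr)$ with $\C$ the generating function for central regions. But as written the plan has genuine gaps, and two of its structural claims are wrong. First, a diagram is not built from ``one or two'' type-$A$ pieces: the central region may have $2k+2$ bounding edges for any $k$, and one glues a list $\mathcal{X}_0,\dots,\mathcal{X}_k$ of type-$A$ diagrams, each used twice by symmetry; moreover the factor $y\P_A'(y)$ does not come from pointing at a vertex on an axis of symmetry, but from an additional distinguished edge (different from the base edge) in the single component $\mathcal{X}_0$, which records where the base edge of the $2n$-gon lies -- marking a vertex would give $\P_A+y\P_A'$, not $y\P_A'$. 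Second, the denominator $1-2\P_A-\P_A^2$ does not arise from ``concentric symmetric annular regions'': by the very independence statement you invoke, every type-$A$ piece attaches directly to the one central region, and there is no nesting. In the paper the whole rational function $\frac{1+12y-y^2-2y^3}{1-2y-y^2}$ is $\C(y)$ itself, obtained by splitting central regions into three kinds (all diameters paired; at least two diameters, all of one colour; both colours present and not all paired), and the factor $1-2y-y^2$ comes from encoding the second kind by circular words in the letters $o,l,x$ in which $o$ and $x$ are never adjacent and $l$ occurs at least twice.

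The essential missing content is precisely this inventory of central regions, which you defer to an unspecified ``case analysis''. That analysis is where the conditions (Pt1)--(Pt3) actually do the work: they force that a paired diameter in a central region entails all $2k+2$ diameters being present, that with unpaired diameters of both colours only very restricted configurations can occur, and that in the one-colour case only arcs joining endpoints of diameters may cross a diameter; these facts are what make the three kinds enumerable, with generating functions $\frac{1+y}{1-y}$, $2\frac{y(1+y)(1+2y-y^2)}{(1-y)^2(1-2y-y^2)}$ and $2y+4\frac{2y-y^2}{(1-y)^2}$, summing to $\frac{1+12y-y^2-2y^3}{1-2y-y^2}$. Without carrying out this classification (or an equivalent one), the numerator and denominator are not derived but merely asserted, and your proposed mechanisms for them would not produce the correct polynomials; agreement of the first six coefficients is a consistency check, not a proof.
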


The paper is organized as follows. In Section \ref{sec:model}
we recall in some detail Fomin and Zelevinsky's combinatorial model 
for a cluster category of Dynkin type $D_n$ from \cite{FZ-Ysystems}.
In Section \ref{sec:torsion}
we first briefly review the fundamentals on torsion
theory in triangulated categories, as introduced by Iyama and
Yoshino \cite{IY}. Then we apply and make explicit this concept for
the cluster categories $\mathcal{D}_n$. In particular we show
in Proposition \ref{prop:torsion}
that the following holds:  
$(\mathsf{X},\mathsf{X}^{\perp})$ is a torsion pair in $\mathcal{D}_n$
if and only if the corresponding collection $\mathcal{X}$ of arcs satisfies
$\mathcal{X} = \nc \nc \mathcal{X}$ (where $\nc \mathcal{X}$ consists of
the arcs not crossing any arc from $\mathcal{X}$). 
Section \ref{sec:Ptolemy} then constitutes one of the two main
parts of the paper. Namely, we first introduce Ptolemy diagrams of type $D$
by imposing explicit combinatorial conditions on collections of arcs 
(see Definition \ref{def:PtolemyD}) and then make a detailed 
analysis to show that these new Ptolemy diagrams of type $D$ are
precisely the collections of arcs satisfying 
$\mathcal{X} = \nc \nc \mathcal{X}$, i.e. the ones corresponding 
to torsion pairs in $\mathcal{D}_n$. 
The second main part of the paper is Section \ref{sec:counting}
in which we enumerate torsion pairs in the cluster category 
of Dynkin type $D_n$. To this end we establish an alternative 
description of Ptolemy diagrams of type $D$ via 
different types of central regions to which Ptolemy diagrams of 
type $A$ are glued.

\section{A geometric model for cluster categories of Dynkin 
type D} \label{sec:model}

In this section we will briefly recall the definition and describe
the structure
of the cluster category of Dynkin type $D_n$. Moreover, we 
recall in detail a geometric model, introduced by Fomin and
Zelevinsky \cite{FZ-Ysystems}, for this cluster category on which
we will build throughout the paper. 
\smallskip

For a quiver $Q$ without oriented cycles the cluster category 
(over a field $k$) 
has been introduced by Buan, Marsh, Reineke, Reiten and Todorov 
\cite{BMRRT} as the orbit category 
$$\mathcal{C}_Q := \mathsf{D}^b(kQ)/(\tau^{-1}\circ \Sigma)$$
where $\tau$ and $\Sigma$ are the Auslander-Reiten translation and 
the suspension on the bounded derived category of the path
algebra $kQ$. It has been shown by Keller \cite{Keller}
that $\mathcal{C}_Q$ is a triangulated category. 

Now let $Q$ be a Dynkin quiver of type $D_n$ for an integer 
$n \geq 4$.
Since the path algebras for different orientations are known to be
derived equivalent, we can assume that $Q$ has the orientation as
given in Figure \ref{fig:Dn}.
The vertices $(n-1)^{\pm}$ are called {\em exceptional}, the others
{\em non-exceptional}.
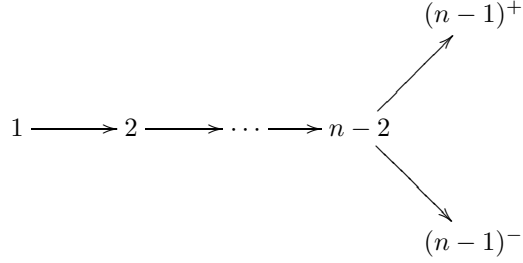
\begin{figure}
\[
  \xymatrix @+1.6pc @!0 {
             &          &               &                     & (n-1)^+  \\
    1 \ar[r] & 2 \ar[r] & \cdots \ar[r] & n-2 \ar[ur] \ar[dr] &     \\
             &          &               &                     & (n-1)^- \\
           }
\]
\caption{The Dynkin quiver $D_n$}
\label{fig:Dn}
\end{figure}
By a result of Happel 
\cite[Corollary\ 4.5(i)]{Happel}, the Auslander-Reiten quiver of 
the derived category $\mathsf{D}^b(kQ)$ is the
repetitive quiver $\BZ D_n$ shown in Figure \ref{fig:ZDn}.
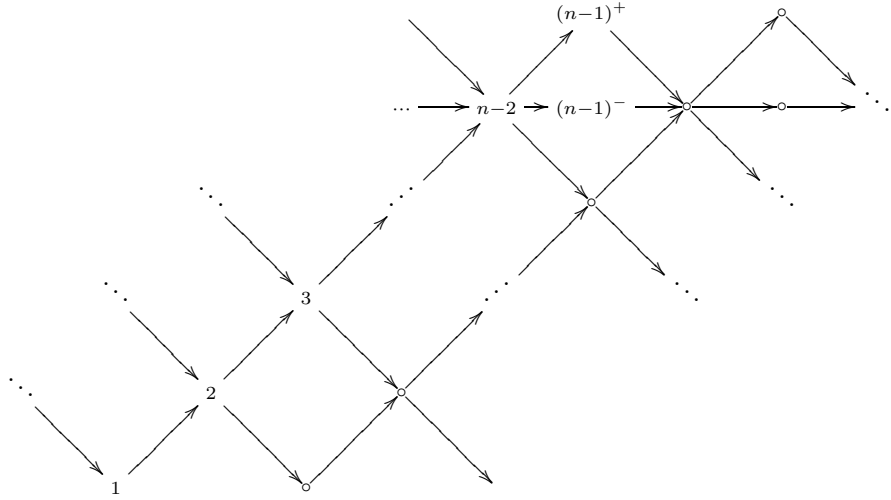
\begin{figure}
\[
  \def\objectstyle{\scriptstyle}
  \vcenter{
  \xymatrix @+1pc @!0 {
& & & & {} \ar[dr] & & (n-1)^+ \ar[dr] & & *{\circ} \ar[dr] & \\
& & & & \cdots \ar[r] & n-2 \ar[dr] \ar[ur] \ar[r] & (n-1)^- \ar[r] & *{\circ} \ar[dr] \ar[ur] \ar[r] & *{\circ} \ar[r] & \ddots \\
& & \ddots \ar[dr] & & \adots \ar[ur] & & *{\circ} \ar[dr] \ar[ur] & & \ddots & \\
& \ddots \ar[dr] & & 3 \ar[dr] \ar[ur] & & \adots \ar[ur] & & \ddots & & \\
\ddots \ar[dr] & & 2 \ar[dr] \ar[ur] & & *{\circ} \ar[dr] \ar[ur] & & & & & \\
& 1 \ar[ur] & & *{\circ} \ar[ur] & & {} & & & & \\
            }
          }
\]
\caption{The repetitive quiver $\BZ D_n$}
\label{fig:ZDn}
\end{figure}

In the sequel we denote the cluster category $\mathcal{C}_{D_n}$ 
just by $\mathcal{D}_n$. In this cluster category, objects of the derived
category are identified modulo the action of the functor $\tau^{-1}\circ \Sigma$. 
So let us describe this action explicitly. 

We will frequently 
use in this paper 
the coordinate system on $\mathbb{Z}D_n$ (first appearing in
a paper by Iyama
\cite[Section 4]{Iyama-maxorthogonal}) as given in 
Figure \ref{fig:Dn_coordinates}.

\begin{figure}
\[
  \def\objectstyle{\scriptstyle}
  \xymatrix @+1.1pc @!0 {
& & & & {} \ar[dr] & & (0,n)^+ \ar[dr] & & (1,n+1)^+ \ar[dr] & \\
& & & & \cdots \ar[r] & (0,n-1) \ar[dr] \ar[ur] \ar[r] & (0,n)^- \ar[r] & (1,n) \ar[dr] \ar[ur] \ar[r] & (1,n+1)^- \ar[r] & \ddots \\
& & \ddots \ar[dr] & & \adots \ar[ur] & & (1,n-1) \ar[dr] \ar[ur] & & \ddots & \\
& \ddots \ar[dr] & & (0,4) \ar[dr] \ar[ur] & & \adots \ar[ur] & & \ddots & & \\
\ddots \ar[dr] & & (0,3) \ar[dr] \ar[ur] & & (1,4) \ar[dr] \ar[ur] & & & & & \\
& (0,2) \ar[ur] & & (1,3) \ar[ur] & & {} & & & & \\
                        }
\]
\caption{The coordinate system in Dynkin type $D$}
\label{fig:Dn_coordinates}
\end{figure}
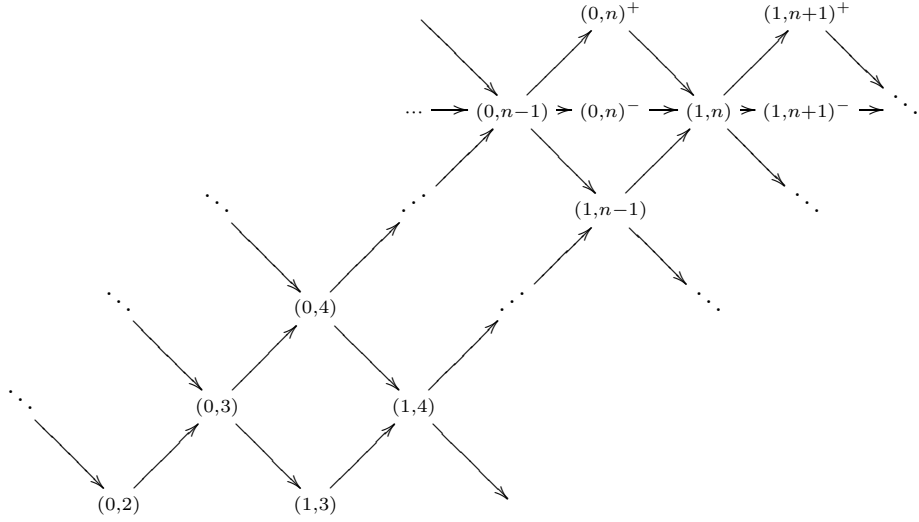

The inverse Auslander-Reiten translation $\tau^{-1}$ acts by shifting 
one unit to the right. More precisely, in the coordinate system this 
means that
$\tau^{-1}(i,j) = (i+1,j+1)$ for the non-exceptional vertices, 
and $\tau^{-1}(i,i+n)^{\pm} = (i+1,i+n+1)^{\pm}$ for the 
exceptional vertices. 

The action of the suspension functor is more subtle and depends
on the parity of $n$; it can be deduced from a paper by 
Miyachi and Yekutieli \cite[table p.\ 359]{MiyachiYekutieli}.

If $n$ is even, then the suspension $\Sigma$ acts by shifting $n-1$
units to the right. Expressed with the coordinate system we thus have
$\Sigma(i,j)=(i+n-1,j+n-1)$ for the non-exceptional vertices and
$\Sigma(i,i+n)^{\pm} = (i+n-1,i+2n-1)^{\pm}$ for the exceptional 
vertices. 

If $n$ is odd, then $\Sigma$ acts by shifting
$n-1$ units to the right and switching each pair of exceptional
vertices, i.e. 
$\Sigma(i,j)=(i+n-1,j+n-1)$ for the non-exceptional vertices, 
but for the exceptional vertices we have  
$\Sigma(i,i+n)^{\pm} = (i+n-1,i+2n-1)^{\mp}$. 

Therefore, for obtaining the Auslander-Reiten quiver of
the cluster category $\mathcal{D}_n$ one has to make the 
following identifications of vertices from the Auslander-Reiten quiver
of the derived category. For the non-exceptional vertices we have
$(\tau^{-1}\circ \Sigma)\, (i,j) = (i+n,j+n)$, i.e. indices just
have to
be taken modulo $n$. For the exceptional vertices we have
$(\tau^{-1}\circ \Sigma)\, (i,i+n)^{\pm} = (i+n,i+2n)^{\pm}$
if $n$ is even, and
$(\tau^{-1}\circ \Sigma)\, (i,i+n)^{\pm} = (i+n,i+2n)^{\mp}$
if $n$ is odd.

Accordingly, the Auslander-Reiten quiver of the cluster category
$\mathcal{D}_n$ has the shape of a cylinder of circumference
$n$; in particular $\mathcal{D}_n$ has precisely $n^2$
indecomposable objects.

There are different variations of combinatorial models 
for the cluster category of Dynkin type $D_n$. 
The first one appeared in work of Fomin and Zelevinsky
\cite{FZ-Ysystems}; it works in a regular $2n$-gon and
uses pairs of arcs which are obtained by 180 degree rotation 
and diameters in two colours (corresponding to the two 
exceptional vertices in type $D_n$). 

Later there has been a variation of this model
by Schiffler \cite{Schiffler-TypeD} using a punctured disc 
and homotopy classes of paths between vertices. This more recent 
model is often used because it fits well into the framework
of triangulations of surfaces. 

However, in this paper we will work with the Fomin-Zelevinsky 
model. For us, it has two main advantages; namely it is 
combinatorially very simple to describe and to work with 
and secondly it is analogous to the standard
combinatorial model which we
used in our earlier classification of torsion pairs in the
cluster categories of Dynkin type $A$ \cite{HJR-Ptolemy}. 

Let us now recall
the Fomin-Zelevinsky model for cluster
categories of Dynkin type $D_n$ \cite[Section 3]{FZ-Ysystems}. 

For any $n\geq 1$ we consider a regular $2n$-gon $P$
(although the Dynkin diagrams $D_n$ only appear for $n\ge 4$,
on the combinatorics side it makes sense to include the
small values of $n$). 

We label the vertices of $P$ counterclockwise by $0,1,\ldots 2n-1$ 
consecutively. In our arguments below vertices will also be numbered
by some $r\in\mathbb{N}$ which might not be in the range 
$0\le r\le 2n-1$; in this case the numbering of vertices always has 
to be taken modulo $2n$.

An {\em arc} in $P$ is a set $\{i,j\}$ of vertices of 
$P$ with $j\not\in \{i-1,i,i+1\}$, i.e. 
$i$ and $j$ are different and non-neighboring vertices. 
The arcs connecting two opposite vertices $i$ and $i+n$ are called 
{\em diameters}. We need two different copies of each of these 
diameters and denote them by $\{i,i+n\}_g$ and $\{i,i+n\}_r$, where 
$0\le i\le n-1$. The indices should indicate that these diameters are
coloured in the colours green and red, which is a convenient 
way to think about and to visualize the diameters. 
By a slight abuse of notation, we sometimes omit the indices 
and just write $\{i,i+n\}$ for diameters, to avoid cumbersome 
definitions or statements. 

Any arc in $P$ which is not a diameter is of the form $\{i,j\}$ where 
$j\in [i+2,i+n-1]$; here $[i+2,i+n-1]$ stands for the set of vertices
of the $2n$-gon $P$ which are met when going counterclockwise from 
$i+2$ to $i+n-1$ on the boundary of $P$.

Such an arc has a partner arc $\{i+n,j+n\}$ which is 
obtained from $\{i,j\}$ by a rotation by 180 degrees. We denote
the pair of arcs $\{\{i,j\},\{i+n,j+n\}\}$ by $\overline{\{i,j\}}$
throughout this paper. 

The model of Fomin and Zelevinsky \cite{FZ-Ysystems}
(see also \cite{Schiffler-TypeD}) for the cluster category 
$\mathcal{D}_n$ of Dynkin type $D_n$ builds on the following 
crucial fact parametrizing indecomposable objects in 
$\mathcal{D}_n$ by certain objects coming from the regular 
$2n$-gon $P$. Namely, the indecomposable objects in $\mathcal{D}_n$ are 
in bijection with the union of the set of pairs $\overline{\{i,j\}}$
of non-diameter arcs and the set of diameters $\{i,i+n\}_g$ 
and $\{i,i+n\}_r$ in two different colours.

The above parametrization of indecomposable objects of 
$\mathcal{D}_n$ via the $2n$-gon can be made explicit by looking
at the structure of the Auslander-Reiten quiver of the
cluster category $\mathcal{D}_n$
and the coordinate system in Figure \ref{fig:Dn_coordinates}.

For the pairs of non-diameter 
arcs $\overline{\{i,j\}}$ the corresponding 
indecomposable object has coordinates $(i,j)$; note that 
the coordinates are only determined modulo $n$ so both arcs
$\{i,j\}$ and $\{i+n,j+n\}$ in the pair $\overline{\{i,j\}}$
yield the same coordinate in the Auslander-Reiten quiver. 
The diameters $\{i,i+n\}_g$ and $\{i,i+n\}_r$ correspond to the 
exceptional vertices $(i,i+n)^+$ and $(i,i+n)^-$ 
in the Auslander-Reiten quiver. 
For specifying precisely which coloured diameter corresponds
to which of the two exceptional vertices one has to make a choice. 

We will use the following bijection between exceptional vertices
and coloured diameters; the motivation for this particular choice 
will become clear in Section \ref{sec:torsion} below; see in
particular the proof of Proposition \ref{prop:Hpm}.

We start by pairing the exceptional vertex $(0,n)^+$ with the
green diameter $\{0,n\}_g$ and $(0,n)^-$ with the
red diameter $\{0,n\}_r$. Then we continue in an alternating
manner. We assign green diameters to the exceptional
vertices $(1,n+1)^-$, $(2,n+2)^+$, $(3,n+3)^-$ etc., 
and red diameters to the exceptional vertices
$(1,n+1)^+$, $(2,n+2)^-$, $(3,n+3)^+$ etc.

It is a crucial observation that this assignment is compatible 
with the identification of vertices in the cluster category 
$\mathcal{D}_n$. In fact, if $n$ is even then 
$(i,i+n)^{\pm}$ get assigned to diameters of the same colour
as the vertex 
$(\tau^{-1}\circ \Sigma)\, (i,i+n)^{\pm} = (i+n,i+2n)^{\pm}$
obtained after shifting $n$ steps to the right. 

However, if $n$ is odd, then the functor $\tau^{-1}\circ \Sigma$ shifts
$n$ units to the right but also flips the exceptional vertices.
Therefore, also in this case any exceptional vertex
$(i,i+n)^{\pm}$ gets assigned to a diameter of the same colour
as the vertex 
$(\tau^{-1}\circ \Sigma)\, (i,i+n)^{\pm} = (i+n,i+2n)^{\mp}$
obtained after identification.

\section{Torsion theory in triangulated categories}
\label{sec:torsion}

In this section we summarize the fundamental definitions
and properties on torsion pairs in triangulated categories 
from the seminal paper by Iyama and Yoshino \cite{IY},
and then apply this abstract concept to the cluster category 
of Dynkin type $D_n$. 

A {\em torsion pair} in a triangulated category 
$\mathcal{C}$ with suspension functor $\Sigma$ is a pair 
$(\mathsf{X},\mathsf{Y})$ of full subcategories closed
under direct sums and direct summands such that
\begin{enumerate}
  \item[{(i)}] the morphism space $\operatorname{Hom}_{\mathcal{C}}(x,y)$ 
is zero for $x \in \mathsf{X}$, $y \in \mathsf{Y}$,
  \item[{(ii)}] each object $c \in \mathcal{C}$ appears in a 
  distinguished triangle $x \rightarrow
  c \rightarrow y \rightarrow \Sigma x$ with $x \in \mathsf{X}$, $y \in \mathsf{Y}$.
\end{enumerate}
This definition in particular includes t-structures, as introduced
by Beilinson, Bernstein, and Deligne \cite{BBD} (with the additional
condition $\Sigma \mathsf{X} \subseteq \mathsf{X}$) 
and the co-t-structures of Bondarko and Pauksztello
\cite{Bondarko}, \cite{Pauksztello}
(with the additional condition $\Sigma^{-1}\mathsf{X} \subseteq \mathsf{X}$). 
\smallskip

Any torsion pair $(\mathsf{X},\mathsf{Y})$
is determined by one of its entries, namely we have 
$$~\mbox{~~~\,\,\,\,\,~~~~~~~}
\mathsf{Y}=\mathsf{X}^{\perp}:=\{\, c \in \mathcal{C} \,|\, 
\operatorname{Hom}_{\mathcal{C}}(x,c) = 0 
\;\mbox{for each}\; x\in \mathsf{X} \,\},
$$
$$\mbox{and~~~}\mathsf{X}={}^{\perp}\mathsf{Y}:=\{\, c \in \mathcal{C} \,|\, 
\operatorname{Hom}_{\mathcal{C}}(c,y) = 0 
\;\mbox{for each}\; y \in \mathsf{Y} \,\}.$$ 
If the triangulated category $\mathcal{C}$ is Hom-finite over a field 
and Krull-Schmidt (conditions which are satisfied for the categories 
considered in this paper) we have the 
following characterisation, see
\cite[Prop. 2.3]{IY}.
Let $\mathsf{X}$ be a contravariantly finite
full subcategory of $\mathcal{C}$ which is closed 
under direct sums and direct summands. 
Then $(\mathsf{X},\mathsf{X}^{\perp})$ is a torsion
pair if and only if $\mathsf{X}={}^{\perp}(\mathsf{X}^{\perp})$.
\smallskip

We want to apply this general concept to the cluster category
$\mathcal{D}_n$
of Dynkin type $D_n$. First note that we can ignore the 
condition on contravariant finiteness since the cluster category 
of type $D_n$ has only finitely many indecomposable objects and hence
every subcategory is contravariantly (and also covariantly) finite. 
Moreover, any subcategory of $\mathcal{D}_n$ closed under direct
sums and direct summands is completely determined by the set of
indecomposable objects it contains. 

For understanding the perpendicular subspaces one needs to understand 
which homomorphism
spaces between indecomposable objects are non-zero. This can 
be explicitly described by certain regions in the Auslander-Reiten
quiver, as first observed by Iyama \cite[Section 4]{Iyama-maxorthogonal}. 

Consider an indecomposable object $x$ in $\mathcal{D}_n$ with
coordinates $(i,j)$ in the Auslander-Reiten quiver, cf. 
Figure \ref{fig:Dn_coordinates}.
To such an indecomposable object we consider the regions
(including boundaries) shown in Figures \ref{fig:DnH+} and 
\ref{fig:DnH-}, respectively. 
\begin{figure}
\[
  \xymatrix @+1pc @!0 {
& *{(i,i+n)} \ar@{-}[rrrr] & & & & *{(j-2,j+n-2)} \ar@{-}[dr] & \\
*{x = (i,j)} \ar@{-}[ur] \ar@{-}[ddrr] & & & {\textstyle H^+(x)} & & &*{(i+n-2,j+n-2)} \\
& & & *{(j-2,i+n)} \ar@{-}[dr] & & & \\
& & *{(j-2,j)} \ar@{-}[ur] & & *{(i+n-2,i+n)} \ar@{-}[uurr] & & \\
            }
\]
\caption{The set $H^+(x)$ in Dynkin type $D$}
\label{fig:DnH+}
\end{figure}
\begin{figure}
\[
  \xymatrix @+1pc @!0 {
& *{(i-n+2,i+2)} \ar@{-}[rrrr] & & & & *{(j-n,j)} \ar@{-}[dr] & \\
*{(i-n+2,j-n+2)} \ar@{-}[ur] \ar@{-}[ddrr] & & & {\textstyle H^-(x)} & & 
&{x = (i,j)} \\
& & & *{(j-n,i+2)} \ar@{-}[dr] & & & \\
& & *{(j-n,j-n+2)} \ar@{-}[ur] & & *{(i,i+2)} \ar@{-}[uurr] & & \\
            }
\]
\caption{The set $H^-(x)$ in Dynkin type $D$}
\label{fig:DnH-}
\end{figure}
If $x=(i,j)$ is a non-exceptional vertex, then all exceptional
vertices in these regions belong to $H^+(x)$ and $H^-(x)$,
respectively. 

If $x=(i,i+n)^{\pm}$ is an exceptional vertex then only half 
of the exceptional vertices belong to the regions $H^+(x)$
and $H^-(x)$. To be precise, if $x=(i,i+n)^{\pm}$ then 
$H^+(x)$ contains $(i+1,i+n+1)^{\mp}$, $(i+2,i+n+2)^{\pm}$, 
$(i+3,i+n+3)^{\mp}$ etc.,
but does not contain $(i+1,i+n+1)^{\pm}$, $(i+2,i+n+2)^{\mp}$, 
$(i+3,i+n+3)^{\pm}$ etc.

Similarly for the region $H^-(x)$. 

The crucial observation in \cite[Section 4]{Iyama-maxorthogonal}
(which follows from the mesh relations on the Auslander-Reiten 
quiver)  
is the following. Let $x$ be an indecomposable object in the cluster 
category $\mathcal{D}_n$. 
Then the indecomposable objects $c\in \mathcal{D}_n$ with
$\operatorname{Hom}_{\mathcal{D}_n}(x,c)\neq 0$ are precisely 
those which lie in the region $H^+(x)$. 
Similarly, the indecomposable objects $c\in \mathcal{D}_n$ with
$\operatorname{Hom}_{\mathcal{D}_n}(c,x)\neq 0$ are precisely 
those which lie in the region $H^-(x)$. 

\smallskip

Let us connect this crucial observation with the combinatorial
model of the cluster category $\mathcal{D}_n$
given by pairs $\overline{\{i,j\}}$
of non-diameter arcs and by green and red
diameters $\{i,i+n\}_g$, $\{i,i+n\}_r$ in a regular $2n$-gon. 
For this we shall need the following notion of crossings of arcs.  
For the non-diameter arcs this crossing will exactly reflect 
the geometric intuition of when two arcs cross. For the diameters 
one has to be careful with the different colours.

\begin{Definition}
\label{def:crossings}
\begin{enumerate} 
\item[{(a)}] We say that two non-diameter arcs $\{i,j\}$ and
$\{k,\ell\}$ \Dfn{cross} precisely if the elements $i,j,k,\ell$ are all
distinct and come in the order $i, k, j, \ell$
when moving around the $2n$-gon $P$ in one direction or the other
(i.e. counterclockwise or clockwise). In particular, the two
arcs in $\overline{\{i,j\}}$ do not cross. 

Similarly, in the case $j=i+n$, the above condition defines 
when a diameter $\{i,i+n\}_g$ (or $\{i,i+n\}_r$) crosses
the non-diameter arc $\{k,\ell\}$. 
\item[{(b)}] We say that two pairs $\overline{\{i,j\}}$ and
$\overline{\{k,\ell\}}$ of non-diameter arcs {\em cross} if
there exist two arcs in these two pairs which cross in the sense of
part (a). (Note that then necessarily the other two rotated arcs also
cross.)

Similarly, the diameter $\{i,i+n\}_g$ (or $\{i,i+n\}_r$) crosses
the pair $\overline{\{k,\ell\}}$ of non-diameter arcs if it 
crosses one of the arcs in $\overline{\{k,\ell\}}$.
(Note that it then necessarily crosses both arcs in $\overline{\{k,\ell\}}$.)
\item[{(c)}] Two diameters $\{i,i+n\}_g$ and $\{j,j+n\}_r$
of different colour {\em cross} if $j\not\in \{i,i+n\}$, i.e.
if they have different endpoints. But $\{i,i+n\}_g$ and 
$\{i,i+n\}_r$ do not cross. 
Moreover, any diameters of the same colour do not cross. 
\end{enumerate}
\end{Definition}

Then the vertices in the above regions $H^+(x)$ and $H^-(x)$  
of the Auslander-Reiten quiver 
can be expressed as follows in terms of arcs. 

\begin{Proposition} 
\label{prop:Hpm}
Let $x$ be a vertex in the Auslander-Reiten quiver
of $\mathcal{D}_n$ with coordinates $(i,j)$ (where in the case
of exceptional vertices this means $(i,i+n)^+$ or $(i,i+n)^-$).
\begin{enumerate}
\item[{(a)}] 
The vertices $y=(k,\ell)$ in the region $H^+(x)$ are
precisely those for which the corresponding arc $\{k,\ell\}$ of the
$2n$-gon crosses (at least) one of the arcs $\{i-1,j-1\}$ and
$\{i+n-1,j+n-1\}$. 
\item[{(b)}] 
The vertices $y=(k,\ell)$ in the region $H^-(x)$ are
precisely those for which the corresponding arc $\{k,\ell\}$ of the
$2n$-gon crosses (at least) one of the arcs $\{i+1,j+1\}$ and
$\{i+n+1,j+n+1\}$. 
\end{enumerate}
\end{Proposition}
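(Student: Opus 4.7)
The plan is to prove (a) by direct case analysis matching the coordinate description of $H^+(x)$ against the crossing condition, and then deduce (b) from (a) formally. Since $y \in H^-(x)$ if and only if $\Hom_{\mathcal{D}_n}(y,x) \neq 0$ if and only if $x \in H^+(y)$, applying (a) to the pair $(y,x)$ and then shifting all four relevant indices by $+1$ (an operation that preserves crossings in $P$) converts the claim of (b) into that of (a). Hence only (a) requires real work.

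For (a), I would first treat the case in which both $x=(i,j)$ and $y=(k,\ell)$ are non-exceptional, so $j\neq i+n$ and $\ell\neq k+n$. The parallelogram depicted in Figure~\ref{fig:DnH+} translates to the coordinate condition that $k\in\{i,i+1,\dots,j-2\}$ and $\ell\in\{j,j+1,\dots,i+n-2\}$, modulo the identification $(k,\ell)\sim(k+n,\ell+n)$ in $\mathcal{D}_n$. The counterclockwise arc from $i-1$ to $j-1$ on the $2n$-gon has interior vertex set exactly $\{i,i+1,\dots,j-2\}$, while its complementary arc has interior $\{j,j+1,\dots,i+2n-2\}$. Thus the coordinate condition says that exactly one endpoint of $\{k,\ell\}$ lies in each of these two arcs, which by Definition~\ref{def:crossings}(a) is precisely the statement that $\{k,\ell\}$ crosses $\{i-1,j-1\}$. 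The $180^\circ$ rotation accounts for the alternative arc $\{i+n-1,j+n-1\}$: the rotated representative $\{k+n,\ell+n\}$ gives the same indecomposable in $\mathcal{D}_n$, and it crosses $\{i-1,j-1\}$ if and only if $\{k,\ell\}$ crosses $\{i+n-1,j+n-1\}$, consistent with Definition~\ref{def:crossings}(b).

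The core difficulty lies in the exceptional vertices, which carry a colour and where the region prescribes that only every second $(m,m+n)^{\pm}$ in the coordinate range belongs to $H^+(x)$. I would fix $x=(i,j)$ non-exceptional and inspect the top zig-zag row of Figure~\ref{fig:DnH+}: the exceptional vertices appearing there alternate in sign, and by the alternating colour assignment set up at the end of Section~\ref{sec:model}, they correspond to a specific sequence of green and red diameters. Checking that precisely this sequence of coloured diameters crosses $\{i-1,j-1\}$ or $\{i+n-1,j+n-1\}$ under Definition~\ref{def:crossings}(c), which requires opposite colour or non-shared endpoints between diameters, is the main obstacle, and must be done separately for even and odd $n$ since the action of $\tau^{-1}\Sigma$ on the exceptional row differs in the two parities. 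The key point is that the colour convention in Section~\ref{sec:model} was engineered precisely so that this verification succeeds after the identification.

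Finally, the case where $x=(i,i+n)^{\pm}$ is itself exceptional proceeds analogously: now $\{i-1,j-1\}$ coincides with $\{i+n-1,j+n-1\}$ as a single diameter, inheriting its colour from the convention applied to the coordinate $(i-1,i+n-1)^{\pm}$, and one verifies that the vertices $y$ read off from Figure~\ref{fig:DnH+} with exceptional input $x$ match exactly the set of arcs and coloured diameters crossing this diameter in the sense of Definition~\ref{def:crossings}. Overall the proof is a bookkeeping exercise rather than a conceptual one: the content of the proposition is the translation between three parallel languages — Auslander-Reiten coordinates, $\Hom$-regions, and arc crossings — and the bulk of the effort is confined to the exceptional cases where the colour convention must be aligned with the $\tau^{-1}\Sigma$-identification.
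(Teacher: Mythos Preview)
Your proposal is correct and follows essentially the same approach as the paper: the paper's own proof says only that the result ``follows by direct inspection immediately from the definition of the regions $H^{\pm}(x)$ and from Definition~\ref{def:crossings}'', leaving the details to the reader, and singles out exactly the same key point you identify, namely that the alternating membership of exceptional vertices in $H^{\pm}(x)$ matches the alternating colour assignment from the end of Section~\ref{sec:model}. Your write-up is simply a more detailed sketch of that direct inspection, including a clean reduction of (b) to (a) via the symmetry $y\in H^-(x)\Leftrightarrow x\in H^+(y)$.
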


\begin{proof}
This follows by direct inspection immediately from the definition of the
regions $H^{\pm}(x)$ and from Definition \ref{def:crossings}. 
We leave the details to the reader.

For the exceptional vertices note that the alternating membership
of the other exceptional vertices to $H^{\pm}(x)$ 
directly corresponds to the
alternating assignment of green and red colours as described at the
end of Section \ref{sec:model}. 
\end{proof}

Now we start describing torsion pairs combinatorially in terms of the
arc model. 

Recall the fact from \cite{IY} mentioned above that 
a pair $(\mathsf{X},\mathsf{X}^{\perp})$
of subcategories (closed under direct sums and direct summands)
of $\mathcal{D}_n$ is a torsion pair if and only
if $\mathsf{X}={}^{\perp}(\mathsf{X}^{\perp})$. 

The subcategory $\mathsf{X}$ has to be closed under direct sums 
and direct summands hence is determined by the set of indecomposable
objects of $\mathcal{D}_n$ it contains. Let $\mathcal{X}$ be the
collection of non-diameter arcs and coloured diameters of the $2n$-gon
corresponding to the indecomposable objects of $\mathsf{X}$; note that
the non-diameter arcs in this collection $\mathcal{X}$ come in pairs
obtained by 180 degree rotation, i.e. the collection $\mathcal{X}$ 
is invariant under rotation by 180 degrees. 

The following definition will be crucial. 

\begin{Definition} For $n\ge 1$ let $P$ be a regular $2n$-gon. 
If $\mathcal{X}$ is a set of arcs in $P$, then we set 
\[
  \nc \mathcal{X} = \{\, \mbox{$\alpha=\{i,j\}$ is an arc in $P$}
  \,\mid\, \mbox{$\alpha$ crosses no arc in $\mathcal{X}$} \,\}.
\]
(Note that $\alpha$ can be a diameter $\{i,i+n\}_g$ or $\{i,i+n\}_r$
here, we avoid the indices for simplicity.)
\end{Definition} 

\begin{Example} \label{ex:nc}
Let us consider two examples for $n=5$, i.e. collections of arcs
of a regular 10-gon which are invariant under 180 degree rotation. 
For better visibility we draw the red diameters in a wavelike 
form and the green ones as straight lines. 
\[ 
  \begin{tikzpicture}[auto]
    \node[name=s, shape=regular polygon, regular polygon sides=10, minimum size=3cm, draw] {}; 
    \draw[shift=(s.corner 4)]  node[left]  {$\mathcal{X}_1=~~$};
    \draw[thick,green] (s.corner 1) to (s.corner 6);
    \draw[thick] (s.corner 2) to (s.corner 4);
    \draw[thick] (s.corner 1) to (s.corner 3);
    \draw[thick] (s.corner 6) to (s.corner 8);
    \draw[thick] (s.corner 7) to (s.corner 9);
    \draw[thick,decorate,decoration=snake,red] (s.corner 5) to (s.corner 10);
  \end{tikzpicture} \hskip1.2cm
  \begin{tikzpicture}[auto]
    \node[name=s, shape=regular polygon, regular polygon sides=10, minimum size=3cm, draw] {}; 
    \draw[shift=(s.corner 4)]  node[left]  
    {$\nc \mathcal{X}_1=~~$};
    \draw[thick,green] (s.corner 5) to (s.corner 10);
    \draw[thick] (s.corner 1) to (s.corner 4);
    \draw[thick] (s.corner 1) to (s.corner 5);
    \draw[thick] (s.corner 6) to (s.corner 9);
    \draw[thick] (s.corner 6) to (s.corner 10);
    \draw[thick,decorate,decoration=snake,red] (s.corner 1) to (s.corner 6);
  \end{tikzpicture} 
\]
\[ 
  \begin{tikzpicture}[auto]
    \node[name=s, shape=regular polygon, regular polygon sides=10, minimum size=3cm, draw] {}; 
    \draw[shift=(s.corner 4)]  node[left]  {$\mathcal{X}_2=~~$};
    \draw[thick] (s.corner 1) to (s.corner 3);
    \draw[thick] (s.corner 2) to (s.corner 6);
    \draw[thick] (s.corner 3) to (s.corner 6);
    \draw[thick,green] (s.corner 1) to (s.corner 6);
    \draw[thick,green] (s.corner 2) to (s.corner 7);
    \draw[thick,green] (s.corner 3) to (s.corner 8);
    \draw[thick,decorate,decoration=snake,red] (s.corner 1) to (s.corner 6);
    \draw[thick] (s.corner 6) to (s.corner 8);
    \draw[thick] (s.corner 1) to (s.corner 7);
    \draw[thick] (s.corner 1) to (s.corner 8);
  \end{tikzpicture} \hskip1.2cm
  \begin{tikzpicture}[auto]
    \node[name=s, shape=regular polygon, regular polygon sides=10, minimum size=3cm, draw] {}; 
    \draw[shift=(s.corner 4)]  node[left]  
    {$\nc \mathcal{X}_2=~~$};
    \draw[thick,green] (s.corner 1) to (s.corner 6);
    \draw[thick] (s.corner 3) to (s.corner 6);
    \draw[thick] (s.corner 3) to (s.corner 5);
    \draw[thick] (s.corner 4) to (s.corner 6);
    \draw[thick] (s.corner 1) to (s.corner 8);
    \draw[thick] (s.corner 1) to (s.corner 9);
    \draw[thick] (s.corner 8) to (s.corner 10);
  \end{tikzpicture} 
\]
\end{Example}

Then torsion pairs in $\mathcal{D}_n$ can be characterized 
combinatorially as follows. 

\begin{Proposition}
\label{prop:torsion}
Let $\mathsf{X}$ be a subcategory of the cluster category $\mathcal{D}_n$,
closed under direct sums and direct summands, and let 
$\mathcal{X}$ be the corresponding collection of arcs of the regular
$2n$-gon. 

Then $(\mathsf{X},\mathsf{X}^{\perp})$ is a torsion pair in 
$\mathcal{D}_n$ if and only if $\mathcal{X} = \nc \nc \mathcal{X}$. 
\end{Proposition}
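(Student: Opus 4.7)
The plan is to use the Iyama--Yoshino criterion recalled in the excerpt: since every subcategory of $\mathcal{D}_n$ is automatically contravariantly finite (the category has finitely many indecomposables), the pair $(\mathsf{X},\mathsf{X}^{\perp})$ is a torsion pair if and only if $\mathsf{X}={}^{\perp}(\mathsf{X}^{\perp})$. So the whole task is to translate this algebraic fixed-point condition, via Proposition \ref{prop:Hpm}, into the combinatorial equality $\mathcal{X}=\nc\nc\mathcal{X}$.

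First I would compute the arc collection of $\mathsf{X}^{\perp}$. By definition $z\in\mathsf{X}^{\perp}$ iff $\Hom_{\mathcal{D}_n}(x,z)=0$ for every indecomposable $x\in\mathsf{X}$. Using Proposition \ref{prop:Hpm}(a), this is equivalent to saying that the arc $\alpha_z$ crosses no arc of the form $\tau\alpha_x$ with $\alpha_x\in\mathcal{X}$, where $\tau$ denotes the action on arcs induced by the Auslander--Reiten translation, i.e.\ a single clockwise rotation (which on diameters also swaps the two colours, in view of the assignment of Section \ref{sec:model}). Hence the arc collection of $\mathsf{X}^{\perp}$ is $\nc(\tau\mathcal{X})$. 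Symmetrically, from Proposition \ref{prop:Hpm}(b), the arc collection of ${}^{\perp}\mathsf{Y}$ equals $\nc(\tau^{-1}\mathcal{Y})$ for any subcategory $\mathsf{Y}$ closed under sums and summands.

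Next I would verify the commutation formula $\nc(\tau^{\pm1}\mathcal{Z})=\tau^{\pm1}(\nc\mathcal{Z})$. This reduces to the statement that $\tau$ preserves the crossing relation of Definition \ref{def:crossings}. For non-diameter arcs the claim is immediate, since $\tau$ is a rigid rotation of the $2n$-gon. For diameters one has to check three subcases by hand: (i) two diameters of the same colour are sent by $\tau$ to two diameters of the opposite (but still common) colour, so remain non-crossing; (ii) two diameters of different colour are sent to diameters of different colour with the same endpoint-disjointness property, so crossings are preserved; (iii) crossings between a diameter and a non-diameter arc are preserved because both are rotated by the same amount. This verification, and hence the consistency of the colour-alternating bijection fixed at the end of Section \ref{sec:model}, is the main technical step; everything else is formal.

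Finally I would chain the two computations. The arc collection of ${}^{\perp}(\mathsf{X}^{\perp})$ equals
\[
\nc\bigl(\tau^{-1}\nc(\tau\mathcal{X})\bigr)
\;=\;\tau^{-1}\nc\nc(\tau\mathcal{X})
\;=\;\tau^{-1}\tau(\nc\nc\mathcal{X})
\;=\;\nc\nc\mathcal{X},
\]
where the first and second equalities apply the commutation $\nc\tau^{\pm1}=\tau^{\pm1}\nc$. Therefore the torsion pair condition $\mathsf{X}={}^{\perp}(\mathsf{X}^{\perp})$ holds precisely when $\mathcal{X}=\nc\nc\mathcal{X}$, which is the claim.
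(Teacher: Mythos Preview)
Your proof is correct and follows essentially the same route as the paper: both reduce to the Iyama--Yoshino criterion, identify the arc collections of $\mathsf{X}^{\perp}$ and ${}^{\perp}\mathsf{Y}$ as $\nc$ of a rotated collection via Proposition~\ref{prop:Hpm}, and then cancel the rotations using that $\tau=\Sigma$ commutes with $\nc$. Your explicit verification that the colour-swapping action of $\tau$ on diameters preserves the crossing relation of Definition~\ref{def:crossings} is a point the paper leaves implicit (it just says $\Sigma$ ``induces a rotation''), so your write-up is in fact slightly more careful there.
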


\begin{proof}
Let $x$ and $y$ be indecomposable objects in 
$\mathcal{D}_n$, given by their coordinates $(i,j)$ and $(k,\ell)$
in the Auslander-Reiten quiver. Then we have that 
$$
\Ext^1_{\mathcal{D}_n}(x,y)=
\Hom_{\mathcal{D}_n}(x,\Sigma y)= 
\Hom_{\mathcal{D}_n}(\Sigma^{-1}x,y)\neq0
$$
if and only if the corresponding arcs 
$\{i,j\}$ and $\{k,\ell\}$
cross. In fact, by Iyama's observation from \cite{Iyama-maxorthogonal}, 
$(k,\ell)$ has to be in the region $H^+(\Sigma^{-1}x)$ which equals
$H^+(\tau^{-1}x)$ since $\tau=\Sigma$ in the cluster category 
$\mathcal{D}_n$. But $\tau^{-1}x$ has coordinates $(i+1,j+1)$
and then by Proposition \ref{prop:Hpm}\,(a)
the arc corresponding to $y$ crosses the arc corresponding to $x$. 

For the perpendicular subcategory we therefore have
\begin{eqnarray*} 
\mathsf{X}^{\perp} & = & \{\, c \in \mathcal{D}_n \,|\, 
\operatorname{Hom}_{\mathcal{D}_n}(x,c) = 0 
\;\mbox{for each}\; x\in \mathsf{X} \,\}\\
& = & \{\, c \in \mathcal{D}_n \,|\, 
\operatorname{Ext}^1_{\mathcal{D}_n}(x,\Sigma^{-1}c) = 0 
\;\mbox{for each}\; x\in \mathsf{X} \,\} \\ 
& = & \{\, c \in \mathcal{D}_n \,|\, 
\operatorname{Ext}^1_{\mathcal{D}_n}(\Sigma x,c) = 0 
\;\mbox{for each}\; x\in \mathsf{X} \,\},
\end{eqnarray*}
which corresponds to the set $\nc \Sigma\, \mathcal{X}$ of arcs,
by definition of the $\nc$ operator.

Similarly, the left perpendicular subcategory $~^{\perp}X$ corresponds
to $\nc \Sigma^{-1}\mathcal{X}$. 

Thus, $(\mathsf{X},\mathsf{X}^{\perp})$ is a torsion pair in 
$\mathcal{D}_n$ if and only if 
$\mathcal{X} = \nc \Sigma^{-1}(\nc \Sigma \mathcal{X})
= \nc \nc \mathcal{X}.$
For the last equation note that $\Sigma^{\pm}$, when interpreted in the
arc model, just induces a rotation and hence commutes with the
$\nc$ operator. 
\end{proof}

\section{Ptolemy diagrams of type $D_n$}
\label{sec:Ptolemy}

We have seen in Proposition \ref{prop:torsion} that
torsion pairs in the cluster category $\mathcal{D}_n$ 
can be characterized via their corresponding 
sets of arcs by the condition $\mathcal{X} = \nc \nc \mathcal{X}$. 

The aim of this section is to characterize combinatorially 
those collections $\mathcal{X}$ of arcs of the $2n$-gon 
which are invariant under 180 degree rotation (i.e. correspond to 
a collection of indecomposable objects of $\mathcal{D}_n$) and
satisfy $\mathcal{X} = \nc \nc \mathcal{X}$. It will turn out below,
in the main result Theorem \ref{thm:ncnc_vs_Ptolemy},
that the following properties are the crucial ones. 

The notion {\em Ptolemy diagram} is used because of the
analogy to the Ptolemy diagrams in Dynkin type $A$ \cite{HJR-Ptolemy}
whose
visualization is very reminiscent of Ptolemy's theorem about the relation 
beteween the lengths of the sides and the diagonals in a cyclic 
quadrilateral, see Figure \ref{fig:Ptolemy-A-Condition}.

\begin{Definition}
  \label{def:PtolemyD}
\begin{enumerate}
\item[{(a)}] 
  Let $\mathcal{X}$ be a collection of arcs of the $2n$-gon, $n> 1$, 
  which is invariant
  under rotation of $180$ degrees.  Then $\mathcal{X}$ is called a
  \Dfn{Ptolemy diagram of type~$D_n$} if it satisfies the following
  conditions.  Let $\alpha = \{i,j \}$ and $\beta = \{k,\ell \}$ 
  be \emph{crossing} arcs in $\mathcal{X}$ (in the sense of
  Definition \ref{def:crossings}).
  \begin{enumerate}
  \item[{(Pt1)}] If $\alpha$ and $\beta$ are not diameters, then 
  those of $\{ 
    i,k\}$, $\{i,\ell \}$, $\{
    j,k \}$, $\{ j,\ell \}$ which are
    arcs in $P$ are also in $\mathcal{X}$. 
    In particular, if two of the vertices $i,j,k,\ell$ are 
    opposite vertices (i.e. one of $k$ and $\ell$ is equal to 
    $i+n$ or $j+n$), then 
    both the green and the red diameter connecting them are also
    in $\mathcal{X}$.
  \item[{(Pt2)}] If both $\alpha$ and $\beta$ are diameters 
  (necessarily of different colour by Definition \ref{def:crossings}\,(c))   
  then those of $\{ i,k\}$, $\{
    i,k+n \}$, $\{ i+n,k \}$, $\{
    i+n,k+n \}$ which are arcs of $P$ are also in $\mathcal{X}$.
\item[{(Pt3)}] If $\alpha$ is a diameter while $\beta$ is not a diameter, 
  then those of $\{
    i,k\}$, $\{ i,\ell \}$, $\{
    j,k \}$, $\{ j,\ell \}$ which are
    arcs and do not cross the arc $\{k+n,\ell +n \}$ are also in 
    $\mathcal{X}$.
    Additionally, the diameters $\{ k, k+n \}$ and
    $\{ \ell, \ell +n \}$ of the same colour as $\alpha$ are
    also in $\mathcal{X}$.
  \end{enumerate}
\item[{(b)}] For the $2$-gon, there is precisely 
one Ptolemy diagram of type $D_1$ 
containing only the two diameters.
\item[{(c)}]   
A collection $\mathcal{X}$ of arcs is called a {\em Ptolemy diagram
of type $D$} if it is a Ptolemy diagram of type $D_n$ for some
$n\ge 1$. 
\end{enumerate}
\end{Definition}

These conditions are illustrated in
Figure \ref{tab:Ptolemy-D-Condition}, where dashed lines
indicate non-diameter arcs and diameters forced by the crossing of 
$\alpha=\{i,j\}$ and~$\beta=\{k,\ell\}$. Note that in 
Definition \ref{def:PtolemyD} the collection of arcs $\mathcal{X}$
is supposed to be invariant under rotation by 180 degrees. Conditions 
(Pt1) and (Pt3) are only formulated for the one
crossing of $\alpha$ and $\beta$, but the rotated arcs are also
crossing arcs in $\mathcal{X}$. Therefore, (Pt1) and (Pt3) also 
guarantee that the rotated arcs appearing in the pictures in
Figure \ref{tab:Ptolemy-D-Condition} 
are also in $\mathcal{X}$, although they are
not explicitly mentioned in Definition \ref{def:PtolemyD}. 

Note that in Example \ref{ex:nc}, the collection $\mathcal{X}_1$
is not a Ptolemy diagram (conditions (Pt1) and (Pt2) are violated),
whereas the collection $\mathcal{X}_2$ is a Ptolemy diagram.

\begin{figure} 
  \centering
  \begin{enumerate}
  \item[{(Pt1)}] 
  The first Ptolemy condition in Dynkin type
    $D$:
\[
  \begin{tikzpicture}[auto]
    \node[name=s, shape=regular polygon, regular polygon sides=22, minimum size=4cm, draw] {}; 
    \draw[shift=(s.corner 4)]  node[left]  {$i$};
    \draw[shift=(s.corner 20)] node[right] {$j$};
    \draw[shift=(s.corner 7)]  node[left]  {$k$};
    \draw[shift=(s.corner 22)] node[above] {$\ell$};
   \draw[thick] (s.corner 4) to node[very near start,below=13pt]{$~$}  (s.corner 20);
    \draw[thick] (s.corner 7) to node[near start, right=10pt] {$~$} (s.corner 22);
    \draw[thick,dashed] (s.corner 4) to (s.corner 7);
    \draw[thick,dashed] (s.corner 4) to (s.corner 22);
    \draw[thick,dashed] (s.corner 20) to (s.corner 7);
    \draw[thick,dashed] (s.corner 20) to (s.corner 22);
    \draw[shift=(s.corner 15)] node[right] {$i+n$};
    \draw[shift=(s.corner 9)] node[left]  {$j+n$};
    \draw[shift=(s.corner 18)] node[right] {$k+n$};
    \draw[shift=(s.corner 11)] node[below] {$\ell +n$};
    \draw[thick] (s.corner 15) to (s.corner 9);
    \draw[thick] (s.corner 18) to (s.corner 11);
    \draw[thick,dashed] (s.corner 15) to (s.corner 18);
    \draw[thick,dashed] (s.corner 15) to (s.corner 11);
    \draw[thick,dashed] (s.corner 9)  to (s.corner 18);
    \draw[thick,dashed] (s.corner 9)  to (s.corner 11);
  \end{tikzpicture} 
  \begin{tikzpicture}[auto]
    \node[name=s, shape=regular polygon, regular polygon sides=22, minimum size=4cm, draw] {}; 
    \draw[shift=(s.corner 4)]  node[left]  {$i$};
    \draw[shift=(s.corner 19)] node[right] {$j=k+n$};
    \draw[shift=(s.corner 8)]  node[left]  {$k=j+n$};
    \draw[shift=(s.corner 22)] node[above] {$\ell$};
    \draw[thick] (s.corner 4) to node[very near start,below=15pt] {$~$} (s.corner 19);
    \draw[thick] (s.corner 8) to node[above=-5pt] {$~$} (s.corner 22);
    \draw[thick,dashed] (s.corner 4) to (s.corner 8);
    \draw[thick,dashed] (s.corner 4) to (s.corner 22);
    \draw[thick,dashed,green] (s.corner 8) to (s.corner 19);
    \draw[thick,dashed,decorate,decoration=snake,red] (s.corner 19) to (s.corner 8);
    \draw[thick,dashed] (s.corner 19) to (s.corner 22);
    \draw[shift=(s.corner 15)] node[right] {$i+n$};
    \draw[shift=(s.corner 11)] node[below] {$\ell +n$};
    \draw[thick] (s.corner 15) to (s.corner 8);
    \draw[thick] (s.corner 19) to (s.corner 11);
    \draw[thick,dashed] (s.corner 15) to (s.corner 19);
    \draw[thick,dashed] (s.corner 15) to (s.corner 11);
    \draw[thick,dashed] (s.corner 8)  to (s.corner 11);
  \end{tikzpicture} 
\]

\item[{(Pt2)}] 
The second Ptolemy condition in Dynkin type
  $D$:
\[
  \begin{tikzpicture}[auto]
    \node[name=s, shape=regular polygon, regular polygon sides=22, minimum size=4cm, draw] {}; 
    \draw[shift=(s.corner 4)]  node[left]  {$i$};
    \draw[shift=(s.corner 15)] node[right] {$i+n$};
    \draw[shift=(s.corner 11)] node[below] {$k$};
    \draw[shift=(s.corner 22)] node[above] {$k+n$};
    \draw[thick,decorate,decoration=snake, red] (s.corner 4) to (s.corner 15);
    \draw (s.corner 11) to node[near start] {$~$} (s.corner 22);
    \draw[thick, green] (s.corner 11) to (s.corner 22);
    \draw[thick,dashed] (s.corner 4) to (s.corner 11);
    \draw[thick,dashed] (s.corner 4) to (s.corner 22);
    \draw[thick,dashed] (s.corner 15) to (s.corner 11);
    \draw[thick,dashed] (s.corner 15) to (s.corner 22);
  \end{tikzpicture} 
\]

\item[{(Pt3)}] 
The third Ptolemy condition in Dynkin type
  $D$:
\[
  \begin{tikzpicture}[auto]
    \node[name=s, shape=regular polygon, regular polygon sides=22, minimum size=4cm, draw] {}; 
    \draw[shift=(s.corner 4)]  node[above]  {$i$};
    \draw[shift=(s.corner 15)] node[right] {$i+n$};
    \draw[shift=(s.corner 6)]  node[left] {$k$};
    \draw[shift=(s.corner 22)] node[above] {$\ell$};
    \draw[shift=(s.corner 17)]  node[right] {$k+n$};
    \draw[shift=(s.corner 11)]  node[below] {$\ell +n$};
    \draw (s.corner 4) to node[near end, left=1pt] {$~$} (s.corner 15);
    \draw[thick, green] (s.corner 4) to (s.corner 15);
    \draw[thick] (s.corner 6) to node[near end, below=0pt] {$~$} (s.corner 22);
    \draw[thick] (s.corner 17) to (s.corner 11);
    \draw[thick,dashed] (s.corner 4) to (s.corner 6);
    \draw[thick,dashed] (s.corner 4) to (s.corner 22);
    \draw[thick,dashed] (s.corner 15) to (s.corner 11);
    \draw[thick,dashed] (s.corner 15) to (s.corner 17);
    \draw[thick,dashed, green] (s.corner 6) to (s.corner 17);
    \draw[thick,dashed, green] (s.corner 22) to (s.corner 11);
  \end{tikzpicture} 
\]
\end{enumerate}
  \caption{The Ptolemy conditions in Dynkin type~$D$.}
  \label{tab:Ptolemy-D-Condition}
\end{figure}

We are now in the position to state the main result of this section.

\begin{Theorem}
\label{thm:ncnc_vs_Ptolemy}
Let $\mathcal{X}$ be a collection of arcs of the $2n$-gon, $n\ge 1$, 
which is invariant under rotation of $180$ degrees. Then the 
following conditions are equivalent:
\begin{itemize}
\item[{(a)}] $\mathcal{X} = \nc \nc \mathcal{X}$.
\item[{(b)}] $\mathcal{X}$ is a Ptolemy diagram of type $D$.
\end{itemize}
\end{Theorem}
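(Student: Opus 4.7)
The plan is to prove the two implications separately, with direction (a)$\Rightarrow$(b) being a verification of the Ptolemy conditions from the closure identity, and direction (b)$\Rightarrow$(a) requiring the construction of a combinatorial witness.

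For (a)$\Rightarrow$(b), assume $\mathcal{X}=\nc\nc\mathcal{X}$ and suppose $\alpha,\beta\in\mathcal{X}$ cross. For each arc $\gamma$ that one of the Ptolemy conditions (Pt1)--(Pt3) requires to lie in $\mathcal{X}$, I would establish the following purely geometric claim: any arc $\delta$ of the $2n$-gon that crosses $\gamma$ also crosses $\alpha$ or $\beta$ (or one of their $180^\circ$-rotated partners, which are in $\mathcal{X}$ by rotational invariance). Granting this, no such $\delta$ lies in $\nc\mathcal{X}$, whence $\gamma\in\nc\nc\mathcal{X}=\mathcal{X}$. The verification reduces to a case analysis on where the endpoints of $\delta$ lie along the boundary relative to those of $\alpha,\beta,\gamma$; for (Pt2) and (Pt3) one additionally has to match colours of diameters against the crossing rule in Definition \ref{def:crossings}(c), which is precisely where the alternating colour assignment from Section \ref{sec:model} pays off.

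For (b)$\Rightarrow$(a) the inclusion $\mathcal{X}\subseteq\nc\nc\mathcal{X}$ is formal: any $\alpha\in\mathcal{X}$ crosses nothing in $\nc\mathcal{X}$ by definition of the latter. The reverse inclusion $\nc\nc\mathcal{X}\subseteq\mathcal{X}$ is the substantive part, and I would argue it by contradiction: given $\gamma\in\nc\nc\mathcal{X}\setminus\mathcal{X}$, construct a witness $\delta\in\nc\mathcal{X}$ that crosses $\gamma$. Writing $\gamma=\{i,j\}$ with boundary sides $A=[i+1,j-1]$, $B=[j+1,i-1]$ in the non-diameter case, I would select $\delta=\{p,q\}$ with $p\in A$, $q\in B$ chosen to be \emph{extremal} in the sense that $p$ is as close to $i$, and $q$ as close to $j$, as possible without being separated from $i$ (respectively $j$) by an arc of $\mathcal{X}$ with both endpoints in $A$ (respectively $B$). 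The Ptolemy conditions then propagate: if $\delta$ still crossed some $\alpha\in\mathcal{X}$, then iterating (Pt1)--(Pt3) on $\alpha$ together with previously-forced arcs would eventually force $\gamma\in\mathcal{X}$, contradicting the assumption. When $\gamma$ is a diameter $\{i,i+n\}_g$ (the case $\{i,i+n\}_r$ being symmetric), the candidate $\delta$ is either a non-diameter arc with endpoints strictly on opposite sides of $\gamma$, or the opposite-colour diameter $\{j,j+n\}_r$ with $j\neq i$, and the argument runs analogously using (Pt2) and (Pt3) in place of (Pt1).

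The main obstacle is the construction and verification of the witness $\delta$ in direction (b)$\Rightarrow$(a). The difficulty is twofold. First, the rotational symmetry of $\mathcal{X}$ forces us to track both $\delta$ and its $180^\circ$-rotate, so the notion of extremality must be chosen symmetrically, and one must rule out the possibility that the rotate of $\delta$ crosses something in $\mathcal{X}$ even though $\delta$ itself does not. Second, condition (Pt3) is asymmetric: it only forces certain corner arcs, those avoiding $\{k+n,\ell+n\}$, so the propagation argument cannot be a single closure step but instead requires a careful induction on the number of crossings, likely phrased as a minimal-counterexample argument and split according to whether $\gamma$ is a non-diameter arc, a green diameter, or a red diameter.
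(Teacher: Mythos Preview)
Your treatment of (a)$\Rightarrow$(b) matches the paper's: one checks for each arc $\gamma$ forced by (Pt1)--(Pt3) that anything crossing $\gamma$ already crosses $\alpha$, $\beta$, or a rotate, so $\gamma\in\nc\nc\mathcal{X}=\mathcal{X}$.

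For (b)$\Rightarrow$(a) your overall strategy is the right one --- given $\gamma\in\nc\nc\mathcal{X}\setminus\mathcal{X}$, exhibit $\delta\in\nc\mathcal{X}$ crossing $\gamma$ --- but your proposed witness is different from the paper's, and the gap is not merely cosmetic. You suggest, for non-diameter $\gamma=\{i,j\}$, a non-diameter witness $\delta=\{p,q\}$ with $p,q$ extremal relative to arcs of $\mathcal{X}$ lying entirely on one side of $\gamma$. This extremality only controls arcs of $\mathcal{X}$ with both endpoints in $A$ or both in $B$; it says nothing about arcs of $\mathcal{X}$ that already straddle $\gamma$ (one endpoint in $A$, one in $B$), and those are precisely the dangerous ones. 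Your plan to ``iterate (Pt1)--(Pt3)'' to force $\gamma\in\mathcal{X}$ from such a crossing is where the real work hides, and it is not clear it terminates without further structural input.

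The paper supplies that input through two preparatory lemmas before attacking (b)$\Rightarrow$(a). Lemma~\ref{lem:non-diameter} shows that if a non-diameter pair $\overline{\{i,j\}}$ lies in $\nc\nc\mathcal{X}\setminus\mathcal{X}$ then there must already be \emph{same-colour diameters at both $i$ and $j$} in $\mathcal{X}$; Lemma~\ref{lem:diameter} shows that if a diameter $\{i,i+n\}_r$ lies in $\nc\nc\mathcal{X}\setminus\mathcal{X}$ then $\{i,i+n\}_g\in\mathcal{X}$. With Lemma~\ref{lem:non-diameter}(b) in hand, the witness in the non-diameter case is then taken to be a \emph{diameter} $\{s,s+n\}_g$ of that same colour, where $s\in[i+1,j-1]$ is a ``free vertex'' (not separated from $i$ or $j$ by an arc of $\mathcal{X}$ inside $[i,j]$). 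The point of choosing a same-colour diameter as witness is that it automatically avoids all diameters in $\mathcal{X}$ of that colour, and the presence of $\{i,i+n\}_g,\{j,j+n\}_g\in\mathcal{X}$ together with (Pt3) lets one rule out non-diameter crossings. For the diameter case one similarly uses Lemma~\ref{lem:diameter} and then analyses the diameter $\{u,u+n\}_g$ for a maximal $u$ with $\{i,u\}\in\mathcal{X}$. In short, the paper's key move --- forcing diameters into $\mathcal{X}$ first and then using a diameter as the witness --- is what makes the propagation argument close; your non-diameter witness $\{p,q\}$ lacks this leverage.
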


Before embarking on the proof of Theorem \ref{thm:ncnc_vs_Ptolemy}
let us draw a few consequences. Note that these are not 
obvious from the combinatorial definition of Ptolemy diagrams 
of type $D$ in Definition \ref{def:PtolemyD}.

\begin{Corollary}
\begin{enumerate}
\item[{(a)}] If $\mathcal{X}$ is a Ptolemy diagram of type $D$,
then $\nc \mathcal{X}$ is also a Ptolemy diagram of type $D$. 
\item[{(b)}] For any $n\ge 1$, the operator $\nc$ induces a 
bijection on the Ptolemy diagrams of type $D$ of the $2n$-gon. 
\end{enumerate}
\end{Corollary}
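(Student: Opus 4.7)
My plan is to derive both parts directly from Theorem \ref{thm:ncnc_vs_Ptolemy} together with two elementary formal properties of the $\nc$ operator. First I would record these properties: (i) the operator is antitone, meaning $\mathcal{X}\subseteq \mathcal{Y}$ implies $\nc\mathcal{Y}\subseteq\nc\mathcal{X}$, which is immediate from the definition; and (ii) for any collection of arcs one has $\mathcal{X}\subseteq \nc\nc\mathcal{X}$, because every arc in $\mathcal{X}$ by definition does not cross any arc in $\nc\mathcal{X}$. Combining (i) and (ii) applied to $\mathcal{X}$, one gets both $\nc\mathcal{X}\subseteq \nc\nc\nc\mathcal{X}$ (from (ii) with $\nc\mathcal{X}$ in place of $\mathcal{X}$) and $\nc\nc\nc\mathcal{X}\subseteq \nc\mathcal{X}$ (applying $\nc$ to (ii) and using (i)). Hence $\nc\nc\nc\mathcal{X}=\nc\mathcal{X}$ for every collection $\mathcal{X}$ of arcs.

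For part (a), I next observe that $180$-degree rotation sends arcs to arcs and preserves the crossing relation of Definition \ref{def:crossings}. Therefore, if $\mathcal{X}$ is invariant under this rotation, then so is $\nc\mathcal{X}$. Applying Theorem \ref{thm:ncnc_vs_Ptolemy} to $\nc\mathcal{X}$, the identity $\nc\nc(\nc\mathcal{X})=\nc\mathcal{X}$ established above immediately shows that $\nc\mathcal{X}$ is a Ptolemy diagram of type $D$. In particular, this conclusion applies whenever $\mathcal{X}$ itself is a Ptolemy diagram of type $D$ (since such diagrams are by Definition \ref{def:PtolemyD} invariant under $180$-degree rotation), yielding (a).

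For part (b), by Theorem \ref{thm:ncnc_vs_Ptolemy} a rotation-invariant collection $\mathcal{X}$ is a Ptolemy diagram of type $D_n$ precisely when $\nc\nc\mathcal{X}=\mathcal{X}$. Thus on the set of Ptolemy diagrams of the $2n$-gon the composition $\nc\circ\nc$ is the identity, so $\nc$ is a self-inverse map on this set, and in particular a bijection.

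The argument is entirely formal given Theorem \ref{thm:ncnc_vs_Ptolemy}; the only point requiring any verification beyond general nonsense about Galois-type closure operators is the compatibility of $\nc$ with $180$-degree rotation, which is the mild obstacle here but is immediate from the symmetry of the crossing relation under this rotation.
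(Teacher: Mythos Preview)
Your proof is correct and follows essentially the same route as the paper: both parts are derived from Theorem~\ref{thm:ncnc_vs_Ptolemy} by formal manipulation of the $\nc$ operator. There are two small differences worth noting. For (a), the paper simply applies $\nc$ to the equality $\mathcal{X}=\nc\nc\mathcal{X}$ (which it already has from the hypothesis that $\mathcal{X}$ is Ptolemy), whereas you prove the slightly stronger identity $\nc\nc\nc\mathcal{X}=\nc\mathcal{X}$ valid for \emph{all} collections via the antitone/closure properties of $\nc$; your version thus actually shows that $\nc\mathcal{X}$ is Ptolemy for any rotation-invariant $\mathcal{X}$, not just Ptolemy ones. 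For (b), the paper argues injectivity and then invokes finiteness of the set of Ptolemy diagrams to get surjectivity, while you observe directly that $\nc\circ\nc$ is the identity on this set, so $\nc$ is an involution; your argument is marginally cleaner since it does not need finiteness. You also make explicit the rotation-invariance of $\nc\mathcal{X}$, which the paper leaves implicit but is needed to apply the theorem.
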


\begin{proof}
(a) By assumption on $\mathcal{X}$ and
Theorem \ref{thm:ncnc_vs_Ptolemy} we have that 
$\mathcal{X} = \nc\nc\mathcal{X}$. But then
$\nc\mathcal{X} = \nc (\nc\nc\mathcal{X}) = \nc\nc (\nc\mathcal{X})$,
and hence $\nc\mathcal{X}$ is again a Ptolemy diagram of type $D$
by Theorem \ref{thm:ncnc_vs_Ptolemy}.

(b) The map induced by the operator $\nc$ on Ptolemy diagrams of type
$D$ is injective by Theorem \ref{thm:ncnc_vs_Ptolemy}.
In fact if $\mathcal{X}$ and $\mathcal{Y}$ are Ptolemy diagrams 
of type $D_n$ and if $\nc\mathcal{X} = \nc\mathcal{Y}$ 
then applying $\nc$ again we get $\nc\nc\mathcal{X}=\nc\nc\mathcal{Y}$
from which $\mathcal{X}=\mathcal{Y}$ follows by using 
Theorem \ref{thm:ncnc_vs_Ptolemy}. Clearly there are only finitely
many Ptolemy diagrams of type $D_n$, so the map
induced by $\nc$ is also surjective. 
\end{proof}

As a preparation for the proof of Theorem \ref{thm:ncnc_vs_Ptolemy}
we shall first state and prove
a few useful lemmas. 
\smallskip

The following notation will be useful in the sequel: 
for any vertices $i$ and 
$j$ of the $2n$-gon, we denote by $[i,j]$ the set of all vertices 
of the $2n$-gon which are met when going counterclockwise
from $i$ to $j$ on the boundary of $P$ (including $i$ and $j$ themselves).
Recall that our numbering of vertices of $P$ was also counterclockwise 
so that $[i,j]$ can be thought of as the interval between $i$ and $j$. 
Also note that the order now matters, $[i,j]$ and $[j,i]$ are different 
sets of vertices.

\begin{Lemma} \label{lem:non-diameter}
Let $\mathcal{X}$ be a Ptolemy diagram of type $D_n$,
$n\ge 1$.
Suppose that we had a pair $\overline{\{i,j\}}$ of non-diameter 
arcs of $P$ which is in $\nc \nc \mathcal{X}$ but not in $\mathcal{X}$.
\begin{enumerate}
\item[{(a)}] If the diameters $\{i,i+n\}_g$ and $\{i,i+n\}_r$ 
are not in $\mathcal{X}$ then there exists an arc $\{i,t\}$ in
$\mathcal{X}$ where $t\in [j+1,i+n-1]$ if $j\in [i,i+n]$
and where $t\in [i+n+1,j-1]$ if $j\in [i+n,i]$.
\item[{(b)}] 
For one of the colours, the diameters attached to $i$ and $j$ 
of the same colour are 
in $\mathcal{X}$ (i.e. $\{i,i+n\}_g$ and $\{j,j+n\}_g$ are 
in $\mathcal{X}$
or $\{i,i+n\}_r$ and $\{j,j+n\}_r$ are in $\mathcal{X}$). 
\end{enumerate}
\end{Lemma}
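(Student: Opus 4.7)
The plan is to prove both parts by contradiction: assuming the stated conclusion fails, I aim to exhibit an arc (or coloured diameter) $\gamma$ that (i) crosses $\{i,j\}$ and (ii) belongs to $\nc\mathcal{X}$, which would contradict the hypothesis $\overline{\{i,j\}}\in\nc\nc\mathcal{X}$.

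For part (a), using the symmetry of $\mathcal{X}$ under $180^\circ$ rotation together with the symmetry between the two halves of the pair $\overline{\{i,j\}}$, reduce without loss of generality to the case $j\in[i+2,i+n-1]$. Suppose for contradiction that neither diameter $\{i,i+n\}_g$ nor $\{i,i+n\}_r$ lies in $\mathcal{X}$, and that no non-diameter arc $\{i,t\}$ with $t\in[j+1,i+n-1]$ is in $\mathcal{X}$. Under these assumptions, the only arcs of $\mathcal{X}$ possibly incident to the vertex $i$ are non-diameters $\{i,t\}$ with $t\in[i+2,j-1]\cup[i+n+1,i-2]$. I would then construct $\gamma$ to hug the vertex $i$ closely on the side facing $i+n-1$; natural candidates are a coloured diameter $\{u,u+n\}_c$ for $u\in[i+1,j-1]$ with $c$ matching one of the diameter colours known to be missing at $i$, or a short non-diameter arc anchored near $i-1$ and terminating in $[i+1,j-1]$. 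The key point is that any arc of $\mathcal{X}$ hypothetically crossing $\gamma$ would, via the Ptolemy conditions \textup{(Pt1)}--\textup{(Pt3)} of Definition~\ref{def:PtolemyD}, force the existence either of a forbidden arc $\{i,t\}$ with $t\in[j+1,i+n-1]$ or of a forbidden diameter at $i$, contradicting the standing assumption.

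For part (b), invoke (a) at both endpoints. If the conclusion of (b) failed, then for each colour $c\in\{g,r\}$ at least one of $\{i,i+n\}_c$ and $\{j,j+n\}_c$ is absent from $\mathcal{X}$. Combined with (a) applied at $i$ and, by the analogous argument, at $j$ (swapping the roles of the endpoints), this yields non-diameter arcs $\{i,t_i\},\{j,t_j\}\in\mathcal{X}$ each heading into the side of the polygon opposite to the other endpoint of $\{i,j\}$; these two arcs therefore cross each other. Applying the appropriate Ptolemy condition to this crossing pair together with their $180^\circ$-rotated partners forces same-colour diameters at $i$ and $j$ into $\mathcal{X}$, contradicting the assumption that (b) fails.

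The main obstacle is the precise construction of the witness $\gamma$ in part (a). The choice has to be tight enough that every arc of $\mathcal{X}$ potentially crossing $\gamma$ can be traced, via one of \textup{(Pt1)}--\textup{(Pt3)}, back to an arc or diameter forbidden by hypothesis. The coloured-diameter bookkeeping is the delicate point, because same-colour diameters do not cross by Definition~\ref{def:crossings}\textup{(c)}; consequently the colour of $\gamma$ must be chosen to match one of the diameter colours missing at $i$, so that $\gamma$ interacts correctly, via \textup{(Pt2)}, with any diameter of $\mathcal{X}$ elsewhere in the polygon.
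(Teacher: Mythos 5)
Your proposal identifies the right contradiction to aim for (producing an arc in $\nc\mathcal{X}$ that crosses $\{i,j\}$, against $\overline{\{i,j\}}\in\nc\nc\mathcal{X}$), but in part (a) the actual construction of the witness $\gamma$ --- which you yourself flag as ``the main obstacle'' --- is exactly the mathematical content of the lemma, and it is not carried out. Moreover, the two candidates you name do not work as stated: a diameter $\{u,u+n\}_c$ with $u\in[i+1,j-1]$, or a short arc near $i-1$ ending in $[i+1,j-1]$, can be crossed by arcs of $\mathcal{X}$ that stay far away from $i$, and no Ptolemy condition is triggered by such a crossing, because (Pt1)--(Pt3) only apply to crossings between two arcs \emph{of} $\mathcal{X}$, and your witness is not in $\mathcal{X}$. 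To trace every potential crossing back to a forbidden arc or diameter at $i$, the witness must be anchored to arcs of $\mathcal{X}$ attached at $i$ chosen of maximal length on each side of $\{i,j\}$ (so that an $\mathcal{X}$-arc crossing the witness necessarily crosses one of these extremal arcs or is itself attached at $i$, at which point (Pt1)/(Pt3) produce an arc contradicting maximality, the absence of diameters at $i$, or $\{i,j\}\notin\mathcal{X}$). This is precisely the iterated probe-and-maximality argument of the paper (probe arcs $\{i-1,i+1\}$, then $\{i-1,t\}$ with $t$ maximal, then $\{t,s\}$ with $s$ extremal), and without it part (a) is unproven.

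Part (b) has a separate logical gap: the negation of (b) only says that for each colour at least one of the two diameters $\{i,i+n\}_c$, $\{j,j+n\}_c$ is missing; it does \emph{not} imply that both coloured diameters are missing at $i$, nor at $j$, which is what the hypothesis of (a) requires. For instance $\{i,i+n\}_g\in\mathcal{X}$ and $\{j,j+n\}_r\in\mathcal{X}$ is compatible with the failure of (b), yet (a) applies at neither endpoint, so your ``apply (a) at $i$ and at $j$'' step collapses. The paper needs three cases: all four diameters absent (then (a) at both ends gives crossing arcs $\{i,t_i\}$, $\{j,t_j\}$, and (Pt1) forces $\{i,j\}\in\mathcal{X}$ --- note the contradiction is $\{i,j\}\in\mathcal{X}$, not forced same-colour diameters as you claim); a diameter at exactly one endpoint (then (a) at the other endpoint gives an arc crossing that diameter, and (Pt3) forces $\{i,j\}\in\mathcal{X}$); and diameters at both endpoints (then they must share a colour, since diameters of different colours with distinct endpoints cross and (Pt2) again forces $\{i,j\}\in\mathcal{X}$). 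The last two cases are absent from your argument, so the proposal does not establish (b).
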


\begin{proof}
We will only consider the case that $j\in [i,i+n]$, i.e.
that the consecutive counterclockwise order of the vertices is
$i,j,i+n,j+n$. The other case $j\in [i+n,i]$ in which the vertices
appear in counterclockwise order as $i,j+n,i+n,j$ is completely
symmetric.   
\smallskip

(a) Consider the arc $\{i-1,i+1\}$. It crosses 
$\{i,j\}\in \nc\nc\mathcal{X}$, hence $\{i-1,i+1\}$ must be crossed 
by an element from $\mathcal{X}$. By assumption, the diameters 
attached to $i$ are not in $\mathcal{X}$. So there exists
a non-diameter arc $\{i,t\}\in \mathcal{X}$ where $t\neq i+n$ and
$t\not\in \{i-1,i,i+1\}$ (otherwise $\{i,t\}$ was not an arc).
Moreover, by assumption the arc
$\{i,j\}\not\in \mathcal{X}$, thus also $t\neq j$. 

If $t\in [j+1,i+n-1]$ then the claim in (a) is shown and we are done. 

So we are left with two possibilities, namely $t\in [i+2,j-1]$
or $t\in [i+n+1,i-2]$. 

Case 1: Let $t\in [i+2,j-1]$. W.l.o.g. we can suppose
that $\{i,t\}$ has maximal length among the arcs $\{i,u\}\in \mathcal{X}$
with $u\in [i+2,j-1]$ (i.e. the arcs $\{i,u\}$ with $u\in [t+1,j-1]$
are not in $\mathcal{X}$). 

Now consider the arc $\{i-1,t\}$. It crosses 
$\{i,j\}\in \nc\nc\mathcal{X}$, hence $\{i-1,t\}$ must be 
crossed by an element from $\mathcal{X}$.

But $\{i-1,t\}$ can not be crossed by a diameter from $\mathcal{X}$.
In fact, the diameters $\{i,i+n\}_g$ and $\{i,i+n\}_r$ are not in
$\mathcal{X}$ by assumption; if a diameter $\{t',t'+n\}_g$ (or
$\{t',t'+n\}_r$) with $t'\in [i+1,t-1]$ was in $\mathcal{X}$
then condition (Pt3) implied that $\{i,i+n\}_g\in \mathcal{X}$ 
(or $\{i,i+n\}_r\in \mathcal{X}$), contradicting the assumption. 

Therefore $\{i-1,t\}$ must be crossed by a non-diameter arc 
$\{u,s\}\in \mathcal{X}$ where $u\in [i,t-1]$ (and $s\in [t+1,i-2]$). 

If $s\in [t+1,j]$ then condition (Pt1) provides an arc $\{i,s\}$
in $\mathcal{X}$
which contradicts the maximality of $t$ (if $s\neq j$)
or the assumption $\{i,j\}\not\in \mathcal{X}$ (if $s=j$).

If $s\in [j+1,i+n-1]$ then condition (Pt1) implies the existence of
an arc as claimed in (a) and we are done. 

If $s=i+n$ then condition (Pt1) implies that the diameters 
attached to $i$ are in $\mathcal{X}$, contradicting the assumption. 

So it remains to deal with the case $s\in [i+n+1,i-2]$. 
By condition (Pt1) then also the arc $\{i,s\}\in \mathcal{X}$. 
W.l.o.g. choose $s\in [i+n+1,i-2]$ so that $\{i,s\}\in \mathcal{X}$
but $\{i,s'\}\not\in \mathcal{X}$ for all $s'\in [i+n+1,s-1]$. 
See Figure \ref{fig:picture1} for an illustration.

\begin{figure} 
$$  \begin{tikzpicture}[auto]
    \node[name=s, shape=regular polygon, regular polygon sides=22, minimum size=4cm, draw] {}; 
    \draw[shift=(s.corner 5)]  node[left]  {$t$};
    \draw[shift=(s.corner 14)] node[below] {$s$};
    \draw[shift=(s.corner 7)]  node[left] {$j$};
    \draw[shift=(s.corner 22)] node[right] {$i$};
    \draw[shift=(s.corner 18)]  node[right] {$j+n$};
    \draw[shift=(s.corner 11)]  node[below] {$i +n$};
    \draw[thick,dotted] (s.corner 5) to (s.corner 14);
    \draw[thick] (s.corner 5) to (s.corner 22);
    \draw[thick] (s.corner 14) to (s.corner 22);
    \draw[thick,dotted] (s.corner 7) to (s.corner 22);
    \draw[thick,dotted] (s.corner 11) to (s.corner 18);
  \end{tikzpicture} 
$$
\caption{~}\label{fig:picture1}
\end{figure}

Now consider the arc (possibly a diameter) $\{t,s\}$. It crosses
$\{i,j\}\in \nc\nc\mathcal{X}$, so it must be crossed by an
element in $\mathcal{X}$. 

If $\{t,s\}$ is crossed by an arc in $\mathcal{X}$
attached at $i$, then the other
endpoint of this arc must be in $[j+1,i+n-1]$ and we are done; 
this follows from the choice of $t$ and $s$ and from 
the assumptions that the diameters attached at $i$ are not in 
$\mathcal{X}$ and that $\{i,j\}\not\in \mathcal{X}$. 

So we can suppose that $\{t,s\}$ is crossed by an arc (possibly a
diameter) in $\mathcal{X}$ which also crosses one of 
$\{i,t\}\in\mathcal{X}$ and $\{i,s\}\in \mathcal{X}$. 

Now by arguments analogous to the above ones 
one uses the choice of $t$ and $s$ (as endpoints of arcs of
maximal length) to conclude from the Ptolemy conditions 
(Pt1) and (Pt3) that there must be an arc $\{i,v\}$ with 
$v\in [j+1,i+n-1]$, as claimed. 
\medskip

Case 2: Let $t\in [i+n+1,i-2]$. 

This case is completely analogous (by symmetry) to Case 1; just
interchange the roles of $t$ and $s$. 
\medskip

This completes the proof of part (a). 
\medskip

(b) Again it suffices by smmetry to deal with the case where
$j\in [i,i+n]$, i.e. where $i,j,i+n,j+n$ is the counterclockwise
order of these vertices. 

Suppose first that all four diameters attached at $i$ and $j$
were not in $\mathcal{X}$. Then we can apply part (a) to both $i$ and 
$j$. This gives arcs $\{i,t\}\in \mathcal{X}$ with $t\in[j+1,i+n-1]$
and $\{j,s\}\in \mathcal{X}$ with $s\in [j+n+1,i-1]$. These two arcs
clearly cross and condition (Pt1) implied that $\{i,j\}\in \mathcal{X}$,
contradicting the assumption. 

Secondly, suppose that for one of $i$ and $j$, say for $i$, 
a diameter attached at $i$ is in $\mathcal{X}$, but for the other 
vertex $j$, no diameter attached at $j$ is in $\mathcal{X}$. 
Then part (a), applied to $j$ yields an arc $\{j,s\}\in \mathcal{X}$ 
with $s\in [j+n+1,i-1]$. This arc crosses the diameter attached at $i$
which is supposed to be in $\mathcal{X}$. But then condition 
(Pt2) implies that $\{i,j\}\in \mathcal{X}$, a contradiction.   

Finally, if for both $i$ and $j$ at least one diameter attached
to each of them is in $\mathcal{X}$ then these two diameters
must have the same colour, as claimed in (b). In fact, if the 
two diameters had different colours they would cross and condition
(Pt2) implied that $\{i,j\}\in \mathcal{X}$, contradicting the
assumption.  
\end{proof}

\begin{Lemma} \label{lem:diameter}
Let $\mathcal{X}$ be a Ptolemy diagram of type $D_n$, $n\ge 1$.
Suppose that there was a diameter, say $\{i,i+n\}_r$, which is in 
$\nc \nc \mathcal{X}$ but not in $\mathcal{X}$.

Then the diameter $\{i,i+n\}_g$ of the other colour must be in
$\mathcal{X}$. 
\end{Lemma}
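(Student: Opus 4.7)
I will argue by contradiction: assume both $\{i,i+n\}_r, \{i,i+n\}_g \notin \mathcal{X}$. The goal is to exhibit an arc in $\nc\mathcal{X}$ that crosses $\{i,i+n\}_r$, contradicting $\{i,i+n\}_r \in \nc\nc\mathcal{X}$.

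First, I reuse the argument from the opening paragraph of the proof of Lemma~\ref{lem:non-diameter}(a): since $\{i-1,i+1\}$ crosses $\{i,i+n\}_r \in \nc\nc\mathcal{X}$, it cannot lie in $\nc\mathcal{X}$, so it crosses some arc of $\mathcal{X}$; this arc must have $i$ as an endpoint and cannot be a diameter by the contradiction hypothesis. Hence there is a non-diameter $\{i,c\} \in \mathcal{X}$ with $c \in [i+2, i-2] \setminus \{i+n\}$. By the $180^\circ$-rotational invariance of $\mathcal{X}$ (equivalently, by the $i \leftrightarrow i+n$ symmetry of the hypotheses), I may assume $c \in [i+2, i+n-1]$; the rotated arc $\{i+n, c+n\}$ then also lies in $\mathcal{X}$.

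Next, the green diameter $\{c,c+n\}_g$ crosses $\{i,i+n\}_r$ (different colours, $c \ne i$), so is not in $\nc\mathcal{X}$, and some $\gamma \in \mathcal{X}$ must cross it. Since $\gamma$ cannot share an endpoint with $\{c,c+n\}$, either $\gamma = \{j,j+n\}_r$ is a red diameter with $j \ne c, i$ (the exclusion $j \ne i$ being forced by $\{i,i+n\}_r \notin \mathcal{X}$), or $\gamma$ is a non-diameter whose endpoints strictly interleave $c$ and $c+n$. I then examine the interaction of $\gamma$ with $\{i,c\}$: if $\gamma = \{j,j+n\}_r$ crosses $\{i,c\}$, then (Pt3) applied to the pair $(\gamma, \{i,c\})$ forces the same-coloured diameter at endpoint $i$, namely $\{i,i+n\}_r$, into $\mathcal{X}$ --- a contradiction. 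If instead $\gamma$ is a non-diameter $\{a,b\}$ crossing $\{i,c\}$ with one of $a,b$ equal to $i+n$ (the vertex opposite $i$), then (Pt1), via its ``opposite-vertex'' clause, forces both colours of $\{i,i+n\}$ into $\mathcal{X}$ --- again a contradiction.

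The main obstacle is the iteration required in the residual sub-cases: a red diameter $\gamma = \{j,j+n\}_r$ that does not cross $\{i,c\}$ (which forces $j \in [c+1, i+n-1]$, up to the identification $j \leftrightarrow j+n$), or a non-diameter $\gamma$ whose endpoints include neither $i$ nor $i+n$. In the former sub-case, the companion green diameter $\{j,j+n\}_g$ also crosses $\{i,i+n\}_r$ and so must be blocked by some new $\gamma' \in \mathcal{X}$ whose endpoints lie in a strictly smaller admissible range. In the latter sub-case, I pass to a longer ``diagonal'' candidate such as $\{c, i-1\}$ (which still crosses $\{i,i+n\}_r$) and re-run the previous two paragraphs, this time analysing all possible blockers of $\{c,i-1\}$ via the Ptolemy conditions. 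Because each iteration strictly shrinks the admissible range of the blocker's endpoints, the process terminates after finitely many steps with a direct application of (Pt1) or (Pt3) as above, producing the desired contradiction.
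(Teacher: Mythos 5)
Your opening moves are fine and match the paper in spirit (the blocker of $\{i-1,i+1\}$ must be a non-diameter $\{i,c\}\in\mathcal{X}$, WLOG $c\in[i+2,i+n-1]$; a red diameter crossing $\{i,c\}$ or a non-diameter blocker through $i+n$ is killed by (Pt3) resp.\ the opposite-vertex clause of (Pt1)). The genuine gap is the final paragraph: the termination claim that ``each iteration strictly shrinks the admissible range of the blocker's endpoints'' is neither defined precisely nor true as stated. In your first residual sub-case the blocker is a red diameter $\{j,j+n\}_r\in\mathcal{X}$ with $j\in[c+1,i+n-1]$ not crossing $\{i,c\}$; the companion $\{j,j+n\}_g$ crosses $\{i,i+n\}_r$, but its blocker $\gamma'$ may again be a red diameter $\{j',j'+n\}_r$ with $j'\in[c+1,i+n-1]$ not crossing $\{i,c\}$ (any red diameter with $j'\neq j$ crosses $\{j,j+n\}_g$, so this crossing imposes essentially no new constraint), and then the ``admissible range'' has not shrunk at all -- indeed two such red diameters can block each other's green companions indefinitely, so your loop has no decreasing measure. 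Likewise, ``pass to $\{c,i-1\}$ and re-run the previous two paragraphs'' is exactly the hard part of the proof and is not carried out: the blockers of $\{c,i-1\}$ (or, in the paper, of $\{i-1,u\}$) need a full case analysis of their own, and nothing in your sketch explains why that analysis closes up rather than spawning further cases.

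For comparison, the paper's proof gets termination not from a shrinking range but from explicit maximality choices and a reduction step: choose $\{i,u\}\in\mathcal{X}$ with $u$ maximal in $[i+2,i+n-1]$, analyse the blockers of $\{i-1,u\}$ (each either yields $\{i,i+n\}_g\in\mathcal{X}$, contradicts maximality via (Pt1), or produces an arc $\{i,t\}\in\mathcal{X}$ with $t\in[u+n,i-2]$, again chosen maximal), and finally analyse the blockers of $\{u,t\}$, observing that any blocker not attached at $i$ must cross $\{i,u\}$ or $\{i,t\}$ and hence falls into a case already settled, while a blocker attached at $i$ can, by the two maximality choices, only be the green diameter $\{i,i+n\}_g$. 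Some structural device of this kind (maximal arcs plus the ``must cross an already-chosen arc'' reduction) is what your proposal is missing; without it the argument is an outline, not a proof.
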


\begin{proof}
We consider the non-diameter arc $\{i-1,i+1\}$. It crosses 
the diameter $\{i,i+n\}_r$ which is in $\nc\nc\mathcal{X}$ 
by assumption. Hence $\{i-1,i+1\}$ must be crossed by an element
from $\mathcal{X}$.  

If it is crossed by a diameter in $\mathcal{X}$
then the only possibility is that $\{i,i+n\}_g\in \mathcal{X}$ 
(since $\{i,i+n\}_r\not\in \mathcal{X}$ by assumption),
and we are done. 

So from now on we can assume that $\{i-1,i+1\}$ is crossed by
a non-diameter arc $\{i,u\}\in \mathcal{X}$. W.l.o.g. we can assume
that $u\in [i+2,i+n-1]$ (the other case $u\in [i+n+1,i-2]$ is
completely symmetric), and that $u$ is 'maximal' in the sense 
that there are no arcs $\{i,u'\}\in \mathcal{X}$ with
$u'\in [u+1,i+n-1]$. 

Now consider the arc $\{i-1,u\}$. It crosses 
the diameter $\{i,i+n\}_r\in \nc\nc\mathcal{X}$. 
Hence $\{i-1,u\}$ must be crossed by an element from $\mathcal{X}$. 

If $\{i-1,u\}$ is crossed by a diameter attached at $t\in [i,u-1]$
then we can conclude that $\{i,i+n\}_g\in \mathcal{X}$, as claimed.
In fact, no red such diameter $\{t,t+n\}_r\in \mathcal{X}$ can 
cross $\{i-1,u\}$ since otherwise condition (Pt3) implied that 
$\{i,i+n\}_r\in \mathcal{X}$, contradicting the assumption. But if
a green diameter $\{t,t+n\}_g\in \mathcal{X}$ 
crosses $\{i-1,u\}$ then again by (Pt3) we conclude that 
$\{i,i+n\}_g\in \mathcal{X}$. 

So we can assume that $\{i-1,u\}$ is crossed by a non-diameter 
arc $\{r,t\}\in \mathcal{X}$ where $r\in [i,u-1]$ (and then 
$t\in [u+1,i-2]$). 
 
By the choice of $u$ we know that $t\not\in [u+1,i+n-1]$
(because otherwise condition (Pt1) would give an arc 
$\{i,t\}\in\mathcal{X}$ longer than $\{i,u\}$).  

If $t=i+n$ then (Pt1) implies that $\{i,i+n\}_g\in \mathcal{X}$,
and we are done. 

So we can assume that $t\in [i+n+1,i-2]$. Condition (Pt1) then 
implies that $\{i,t\}\in\mathcal{X}$. Moreover, if now 
$t\in [i+n+1,u+n-1]$ then $\{i,t\}\in\mathcal{X}$ crosses
the rotated arc $\{i+n,u+n\}$ which is in $\mathcal{X}$
since $\mathcal{X}$ is invariant under 180 degree rotation. 
Then condition (Pt1) implies that the diameter 
$\{i,i+n\}_g\in \mathcal{X}$, and we are done in this case;
see Figure \ref{fig:picture2}. 

\begin{figure} 
$$  \begin{tikzpicture}[auto]
    \node[name=s, shape=regular polygon, regular polygon sides=22, minimum size=4cm, draw] {}; 
    \draw[shift=(s.corner 1)]  node[above]  {$i$};
    \draw[shift=(s.corner 7)] node[left] {$u$};
    \draw[shift=(s.corner 12)]  node[below] {$i+n$};
    \draw[shift=(s.corner 14)] node[below] {$t$};
    \draw[shift=(s.corner 18)]  node[right] {$u+n$};
    \draw[thick] (s.corner 1) to (s.corner 7);
    \draw[thick] (s.corner 12) to (s.corner 18);
    \draw[thick] (s.corner 1) to (s.corner 14);
    \draw[thick,dashed, green] (s.corner 1) to (s.corner 12);
  \end{tikzpicture} 
$$
\caption{~}\label{fig:picture2}
\end{figure}

Therefore we are left with the case that $\{i,t\}\in\mathcal{X}$
and $t\in [u+n,i-2]$. W.l.o.g. we can choose such $t$ whose
arc has maximal length, i.e. none of the arcs $\{i,t'\}$ with 
$t'\in [u+n,t-1]$ is in $\mathcal{X}$. 

Now we consider the arc (possibly a diameter) $\{u,t\}$. It crosses
the diameter $\{i,i+n\}_r$ which is in $\nc\nc\mathcal{X}$, so it
must be crossed by an element from $\mathcal{X}$. 

If $\{u,t\}$ is crossed by an arc from $\mathcal{X}$
attached at the vertex $i$ then
by the choice of $u$ and $t$ (maximality of the arcs $\{i,u\}$
and $\{i,t\}$) it can only be crossed by the green diameter 
$\{i,i+n\}_g\in\mathcal{X}$, and we are done. 

So we can assume that $\{u,t\}$ is crossed by an arc from $\mathcal{X}$
not attached at $i$. Any such arc necessarily also crosses 
$\{i,u\}$ or $\{i,t\}$. But then we are in one of the situations already
dealt with above where we have in each case concluded that 
the desired green arc $\{i,i+n\}_g$ is in $\mathcal{X}$ or 
obtained a contradiction. 

This completes the proof of the lemma. 
\end{proof}

After these two preparatory results we now come to the proof 
of the main result of this section. 
\medskip

\noindent
\underline{{\em Proof of Theorem \ref{thm:ncnc_vs_Ptolemy}.}}
The direction '(a) $\Longrightarrow$ (b)' is fairly straightforward. 
Let $\mathcal{X} = \nc\nc\mathcal{X}$. In each of the conditions
(Pt1), (Pt2), (Pt3) as visualized in Figure \ref{tab:Ptolemy-D-Condition}
we have to confirm that the dashed arcs must be 
in $\nc\nc\mathcal{X} = \mathcal{X}$. Slightly reformulated 
this means that for each of the dashed arcs 
the following holds: if they are crossed by an arc (possibly 
diameter) $\alpha$ of the $2n$-gon $P$ then $\alpha$ must also
cross an element in $\mathcal{X}$. But this condition is indeed
easily verified from looking at the figures provided in
Figure \ref{tab:Ptolemy-D-Condition} 
(where the solid arcs are in $\mathcal{X}$
by assumption). 
\smallskip

The direction '(b) $\Longrightarrow$ (a)' is much more involved.

Let $\mathcal{X}$ be a Ptolemy diagram of type $D$, 
so $\mathcal{X}$ satisfies conditions
(Pt1), (Pt2) and (Pt3). We have to show that 
$\mathcal{X} = \nc\nc\mathcal{X}$. Note that the inclusion
$\mathcal{X} \subseteq \nc\nc\mathcal{X}$ always holds (by definition
of the operator $\nc$). 

Thus we have to show that the conditions (Pt1), (Pt2) and (Pt3)
imply that $\nc\nc\mathcal{X}\subseteq\mathcal{X}$. 
This is where we shall make use of the preceding lemmas. 

We have to consider the cases of pairs of non-diameters and of
diameters separately. 

First, suppose (for a contradiction) there was a pair 
$\overline{\{i,j\}}$ of non-diameter arcs in 
$(\nc\nc\mathcal{X}) \setminus \mathcal{X}$. By Lemma
\ref{lem:non-diameter}\,(b), two diameters of the same colour
attached at $i$ and $j$ are then in $\mathcal{X}$, say
$\{i,i+n\}_g\in \mathcal{X}$ and $\{j,j+n\}_g\in \mathcal{X}$.

We can w.l.o.g. (by symmetry) assume that $j\in [i,i+n]$, i.e.
$i,j,i+n,j+n$ come in this order when going counterclockwise around 
around the boundary of $P$. 

We then consider the vertices $s\in [i+1,j-1]$, see Figure
\ref{fig:picture3}. We call such a vertex 
{\em free} if there is no arc $\{u,v\}\in\mathcal{X}$ with endpoints
$u\in [i,s-1]$ and $v\in [s+1,j]$. 
Such a free vertex must exist. In fact, if $i+1$ is
free, we are done. Otherwise, there exists an arc 
$\{i,s_1\}\in\mathcal{X}$ with $s_1\in [i+2,j-1]$ ($s_1\neq j$
since $\{i,j\}\not\in \mathcal{X}$ by assumption). Choose a longest
such arc; then the vertex $s_1$ is free since otherwise condition
(Pt1) would produce a longer arc than $\{i,s_1\}$ in $\mathcal{X}$
or implies that $\{i,j\}\in\mathcal{X}$, in both cases a contradiction.

Now take a free vertex $s\in [i+1,j-1]$ and consider the green 
diameter $\{s,s+n\}_g$. It crosses $\{i,j\}\in \nc\nc\mathcal{X}$, 
so it must be crossed by an element from $\mathcal{X}$. 

We claim that $\{s,s+n\}_g$ can not be crossed by a non-diameter
arc in $\mathcal{X}$. In fact, with $s$ being a free vertex the 
diameter $\{s,s+n\}_g$ can not be crossed by an arc from $\mathcal{X}$
having both endpoints 
in $[i,j]$. If $\{s,s+n\}_g$ was crossed by an arc having one
endpoint in $[i,j]$ and the other outside $[i,j]$ then such an arc
would cross one of the green diameters $\{i,i+n\}_g\in\mathcal{X}$ 
and $\{j,j+n\}_g\in\mathcal{X}$
and condition (Pt3) would produce an arc in $\mathcal{X}$ contradicting
the freeness of $s$ (use the rotational symmetry of $\mathcal{X}$ if
initially the freeness of $s+n$ is violated). 
By rotational symmetry all arguments apply equally well to the 
polygon bounded by $\{i+n,j+n\}$ and the edges of $P$
along the interval $[i+n,j+n]$, and the
free vertex $s+n$ therein. Thus the only possibility left is that
$\{s,s+n\}_g$ was crossed by a non-diameter arc $\{u,v\}\in\mathcal{X}$
with $u\in [j+1,i+n-1]$ and $v\in [j+n+1,i-1]$, see Figure 
\ref{fig:picture3}. 
Then $\{u,v\}$ crossed
both green diameters $\{i,i+n\}_g\in\mathcal{X}$ and 
$\{j,j+n\}_g\in\mathcal{X}$ and condition (Pt3) implies that
$\{i,u\}\in\mathcal{X}$. But the latter crosses 
$\{j,j+n\}_g\in\mathcal{X}$ and another application of (Pt3) 
shows that $\{i,j\}\in \mathcal{X}$, a contradiction. 

\begin{figure} 
$$  \begin{tikzpicture}[auto]
    \node[name=s, shape=regular polygon, regular polygon sides=22, minimum size=4cm, draw] {}; 
    \draw[shift=(s.corner 22)]  node[above]  {$i$};
    \draw[shift=(s.corner 2)] node[above] {$s$};
    \draw[shift=(s.corner 5)]  node[left] {$j$};
    \draw[shift=(s.corner 9)] node[left] {$u$};
    \draw[shift=(s.corner 11)]  node[below] {$i+n$};
    \draw[shift=(s.corner 13)]  node[below] {$s+n$};
    \draw[shift=(s.corner 16)] node[right] {$j+n$};
    \draw[shift=(s.corner 19)]  node[right] {$v$};
    \draw[thick] (s.corner 22) to (s.corner 9);
    \draw[thick] (s.corner 9) to (s.corner 19);
    \draw[thick,green] (s.corner 5) to (s.corner 16);
    \draw[thick,green] (s.corner 22) to (s.corner 11);
    \draw[thick,dotted,green] (s.corner 2) to (s.corner 13);
    \draw[thick,dotted] (s.corner 22) to (s.corner 5);
    \draw[thick,dotted] (s.corner 11) to (s.corner 16);
  \end{tikzpicture} 
$$
\caption{~}\label{fig:picture3}
\end{figure}

This completes the proof of the claim that the diameter 
$\{s,s+n\}_g$ can not be crossed by any non-diameter arc
in $\mathcal{X}$.  

Therefore, $\{s,s+n\}_g$ must be crossed by a diameter in $\mathcal{X}$
which is necessarily red, say by $\{s',s'+n\}_r$. If 
$s'\not\in \{i,j,i+n,j+n\}$ then condition (Pt2) implies the 
existence of a non-diameter arc in $\mathcal{X}$ crossing 
$\{s,s+n\}_g$. But this has just been excluded by the preceding claim. 
Finally, if $s'\in \{i,j,i+n,j+n\}$ then $\{i,i+n\}_r\in\mathcal{X}$
or $\{j,j+n\}_r\in\mathcal{X}$; but then condition (Pt2) implies
that $\{i,j\}\in\mathcal{X}$, contradicting the assumption. 

This completes the proof that there are no non-diameter arcs 
in $(\nc\nc\mathcal{X})\setminus \mathcal{X}$. 
\smallskip

Secondly, suppose (for a contradiction) that there was a 
(w.l.o.g.\;red) diameter $\{i,i+n\}_r$ in 
$(\nc\nc\mathcal{X}) \setminus \mathcal{X}$. 
By Lemma \ref{lem:diameter} we have that the green diameter
$\{i,i+n\}_g$ is in $\mathcal{X}$. 

We consider the green diameter $\{i+1,i+n+1\}_g$. It crosses 
the red diameter $\{i,i+n\}_r\in \nc\nc\mathcal{X}$, so 
$\{i+1,i+n+1\}_g$ must be crossed by an element from $\mathcal{X}$.

If $\{i+1,i+n+1\}_g$ is crossed by a (necessarily red) diameter
$\{s,s+n\}_r\in \mathcal{X}$, then $s\not\in \{i,i+n\}$ by 
assumption. But then $\{s,s+n\}_r$ crosses $\{i,i+n\}_g\in \mathcal{X}$ 
and condition (Pt2) implies that $\{i+1,i+n+1\}_g$ is also crossed by 
a non-diameter arc $\{i,s\}\in \mathcal{X}$. 

So from now on we can assume that $\{i+1,i+n+1\}_g$ is crossed 
by a non-diameter arc from $\mathcal{X}$. 
Using condition (Pt3) we can even assume that $\{i+1,i+n+1\}_g$ is 
crossed by a non-diameter arc of the form $\{i,u\}\in\mathcal{X}$ 
with $u\in [i+2,i+n-1]$. (In fact, any non-diameter arc not attached at 
$i$ crossing $\{i+1,i+n+1\}_g$ also crosses $\{i,i+n\}_g\in \mathcal{X}$ 
and (Pt3) can be applied.) W.l.o.g. we choose $u$ maximal with this property,
i.e. $\{i,r\}\not\in \mathcal{X}$ for all $r\in [u+1,i+n-1]$. 

Now consider the green diameter $\{u,u+n\}_g$, see Figure 
\ref{fig:picture4}.
\begin{figure} 
$$  \begin{tikzpicture}[auto]
    \node[name=s, shape=regular polygon, regular polygon sides=22, minimum size=4cm, draw] {}; 
    \draw[shift=(s.corner 1)]  node[above]  {$i$};
    \draw[shift=(s.corner 3)] node[above] {$i+1$};
    \draw[shift=(s.corner 7)]  node[left] {$u$};
    \draw[shift=(s.corner 11)] node[below] {$i+n$};
    \draw[shift=(s.corner 14)]  node[below] {$i+n+1$};
    \draw[shift=(s.corner 18)]  node[right] {$u+n$};
    \draw[thick] (s.corner 1) to (s.corner 7);
    \draw[thick] (s.corner 12) to (s.corner 18);
    \draw[thick,green] (s.corner 1) to (s.corner 12);
    \draw[thick,dashed, decorate,decoration=snake, red] (s.corner 1) to (s.corner 12);
    \draw[thick,dashed,green] (s.corner 7) to (s.corner 18);
    \draw[thick,dashed,green] (s.corner 2) to (s.corner 13);
  \end{tikzpicture} 
$$
\caption{~}\label{fig:picture4}
\end{figure}

It crosses the red diameter $\{i,i+n\}_r\in \nc\nc\mathcal{X}$, so 
$\{u,u+n\}_g$ must be crossed by an element from $\mathcal{X}$.

Suppose first that $\{u,u+n\}_g$ is crossed by a (red) diameter
$\{v,v+n\}_r\in \mathcal{X}$. Then one of the endpoints, say $v$,
must be in $[i+1,i+n-1]$ ($v=i$ is impossible by the assumption that
$\{i,i+n\}_r\not\in\mathcal{X}$). If $v\in [i+1,u-1]$ then
$\{v,v+n\}_r\in \mathcal{X}$ crosses $\{i,u\}\in \mathcal{X}$
and condition (Pt3) implies that $\{i,i+n\}_r\in\mathcal{X}$,
contradicting the assumption. If $v\in [u+1,i+n-1]$ then 
$\{v,v+n\}_r$ crosses $\{i,i+n\}_g\in \mathcal{X}$ and condition
(Pt2) yields an arc $\{i,v\}\in \mathcal{X}$ contradicting the
maximality of $u$. 

So we are left with the case that $\{u,u+n\}_g$ is crossed 
by a non-diameter arc $\{v,w\}\in \mathcal{X}$ (and hence also
by the rotated arc $\{v+n,w+n\}\in \mathcal{X}$). If none of the arcs
in the pair $\overline{\{v,w\}}$ crosses the green diameter 
$\{i,i+n\}_g$ then one of the arcs, say $\{v,w\}$ crosses 
$\{i,u\}\in \mathcal{X}$ and condition (Pt1) yields an arc 
contradicting the maximality of $u$ or the assumption that 
$\{i,i+n\}_r\not\in\mathcal{X}$. 

So we can assume from now on that the non-diameter arc
$\{v,w\}\in \mathcal{X}$ crossing $\{u,u+n\}_g$ also
crosses the green diameter $\{i,i+n\}_g\in\mathcal{X}$. 

Then one of the endpoints, say $v$, must be in the interval 
$[i+1,i+n-1]$, but $v\neq u$ (otherwise the arc can not cross
$\{u,u+n\}_g$).  

If $v\in [i+1, u-1]$ then $w\in [i+n+1,u+n-1]$. But then
$\{v,w\}\in\mathcal{X}$ crosses $\{i+n,u+n\}\in\mathcal{X}$
and condition (Pt1) implies that $\{v,i+n\}\in \mathcal{X}$;
but $\{v,i+n\}$ crosses $\{i,u\}\in \mathcal{X}$ and hence
condition (Pt1) yields that the red diameter $\{i,i+n\}_r\in 
\mathcal{X}$, contradicting the assumption. 

Finally, if $v\in [u+1, i+n-1]$ then $w\in [u+n+1,i-1]$. 
But then 
condition (Pt3) would imply
that $\{i,v\}\in \mathcal{X}$, contradicting 
the maximality of $u$. 
\smallskip

This completes the proof that there are no diameters 
in $(\nc\nc\mathcal{X})\setminus \mathcal{X}$. 
\smallskip

Together with the earlier proof hat there are no non-diameter
arcs in $(\nc\nc\mathcal{X})\setminus \mathcal{X}$ we have
shown that conditions (Pt1), (Pt2) and (Pt3) imply
that $\nc\nc\mathcal{X} = \mathcal{X}$. 
\smallskip

This completes the proof of Theorem \ref{thm:ncnc_vs_Ptolemy}.
\hfill $\Box$

\section{Counting torsion pairs in the cluster category of 
type~D}
\label{sec:counting}

In this section our aim is to count the torsion pairs in the
cluster category of Dynkin type $D_n$. As a main result we will 
give the generating function for the number of torsion pairs 
explicitly. 
This will be achieved by first providing an alternative 
description of Ptolemy diagrams of type $D$. In this description 
we will build on Ptolemy diagrams of Dynkin type $A$, as introduced
in \cite{HJR-Ptolemy}. We briefly recall the definition and the
facts needed for our purposes. 

For any $n\ge 1$, let $P$ be a regular $(n+3)$-gon with a 
distinguished oriented edge which we
refer to as the {\em distinguished base edge}.   
An {\em edge} of $P$ is a set of two neighbouring vertices of the polygon.  
As before, an {\em arc} is a set
of two non-neighbouring vertices of $P$.  

Two arcs $\{i,j\}$ and $\{k,\ell\}$
\Dfn{cross} if their end points are all distinct and come in the
order $i, k, j, \ell$ when moving around the
polygon $P$ in one direction or the other.  This corresponds to an
obvious notion of geometrical crossing.  Note that an arc does
not cross itself and that two arcs sharing an end point do not
cross.

  Let $\mathcal{X}$ be a set of arcs in $P$.  Then $\mathcal{X}$ is a
  \Dfn{Ptolemy diagram of type~$A$} if it has the following property:
  when $\{i,j \}$ and $\{k,\ell\}$ are crossing arcs from $\mathcal{X}$, 
  then those of $\{i,k\}$, $\{i,\ell \}$, $\{j,k\}$, $\{j,\ell \}$ 
  which are arcs are also in $\mathcal{X}$,
  see Figure \ref{fig:Ptolemy-A-Condition}.

\begin{figure}
\[
  \begin{tikzpicture}[auto]
    \node[name=s, shape=regular polygon, regular polygon sides=20, minimum size=4cm, draw] {}; 
    \draw[thick] (s.corner 5) to node[very near start,below=20pt] {$~$} (s.corner 16);
    \draw[shift=(s.corner 5)] node[left] {$i$};
    \draw[shift=(s.corner 16)] node[right] {$j$};
    \draw[thick] (s.corner 9) to node[near start] {$~$} (s.corner 19);
    \draw[shift=(s.corner 9)] node[left] {$k$};
    \draw[shift=(s.corner 19)] node[right] {$\ell$};
    \draw[thick,dotted] (s.corner 5) to (s.corner 9);
    \draw[thick,dotted] (s.corner 5) to (s.corner 19);
    \draw[thick,dotted] (s.corner 16) to (s.corner 9);
    \draw[thick,dotted] (s.corner 16) to (s.corner 19);
  \end{tikzpicture} 
\]
\caption{The Ptolemy condition in Dynkin type~$A$}
\label{fig:Ptolemy-A-Condition}
\end{figure}
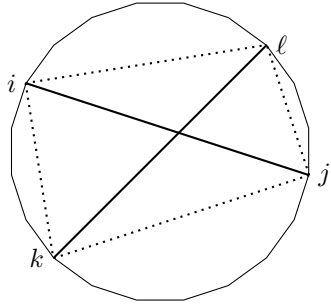

In \cite{HJR-Ptolemy} we have classified and enumerated the
Ptolemy diagrams of type $A_n$. In particular, we have shown
in \cite[Section 3]{HJR-Ptolemy} that
the generating function for Ptolemy diagrams of type
$A$,
\begin{equation}
  \label{eq:Ptolemy-A-generating-function}
  \P_A(y):=\sum_{N\geq1} \#\{\text{Ptolemy diagrams of type~$A$ of the $(N+1)$-gon}\}
  y^N.  
\end{equation}
satisfies
\begin{equation*}
  \P_A(y) = y + \frac{\P_A(y)^2}{1-\P_A(y)} +
  \frac{\P_A(y)^3}{1-\P_A(y)}.
\end{equation*}

\smallskip

We now turn back to Ptolemy diagrams of type $D$. 
In this section we shall determine the generating function for 
Ptolemy diagrams of type $D$,
\begin{equation*}
  \label{eq:Ptolemy-D-generating-function}
  \P_D(y):=\sum_{N\geq1} \#\{\text{Ptolemy diagrams of type~$D$ of the $2N$-gon}\}
  y^N.
\end{equation*}
Roughly speaking we will decompose a Ptolemy diagram of type~$D$ into
a \lq central region\rq\ (again with 180\textdegree\ rotational
symmetry) containing all the diameters, bounded by a polygon and a
circular arrangement of Ptolemy diagrams of type~$A$ \lq glued\rq\ to
the edges of this central polygon as sketched in
Figure~\ref{fig:decomposition}.
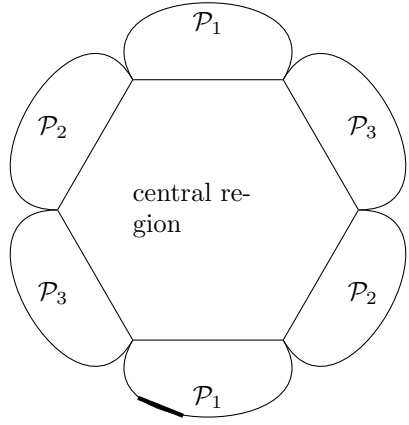
\begin{figure}
 \centering
  \begin{tikzpicture}[auto]
    \node[name=s, shape=regular polygon, regular polygon sides=6,  minimum size=4cm, draw, text width=2cm] at (0,0) {central region};
    \draw[out=60, in=120,looseness=2] (s.corner 1) to node {$\P_1$}(s.corner 2);
    \draw[out=120,in=180,looseness=2] (s.corner 2) to node {$\P_2$}(s.corner 3);
    \draw[out=180,in=240,looseness=2] (s.corner 3) to node {$\P_3$}(s.corner 4);
    \draw[out=240,in=300,looseness=2] (s.corner 4) to node {$\P_1$}(s.corner 5);
    \draw[out=300,in=0,  looseness=2] (s.corner 5) to node {$\P_2$}(s.corner 6);
    \draw[out=0,  in=60, looseness=2] (s.corner 6) to node {$\P_3$}(s.corner 1);
    \draw[ultra thick] (-0.93, -2.5) to (-0.33,-2.74);
  \end{tikzpicture} 
  \caption{The decomposition of a Ptolemy diagram of type $D$ with
    the distinguished base edge drawn bold.}
  \label{fig:decomposition}
\end{figure}

Before we can define the \lq central region\rq\ of a Ptolemy diagram
of type~$D$ precisely, we need the following fact about the structure
of Ptolemy diagrams:
\begin{Lemma}\label{lem:uncrossed-diameter}
  Let $\mathcal{X}$ be a Ptolemy diagram of type~$D$ in the $2n$-gon
  $P$.  Suppose that $\mathcal{X}$ does not contain a diameter.  Then
  there exists a diameter $\{i, i+n\}$ (green or red) which is not
  crossed by any arc in $\mathcal{X}$.
\end{Lemma}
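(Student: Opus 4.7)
I would proceed by strong induction on $n$. The bases $n \leq 2$ are vacuous because the $2$-gon and $4$-gon have no non-diameter arcs, so a diameter-free $\mathcal{X}$ is empty and every diameter is trivially uncrossed. For the inductive step I argue by contradiction: suppose $\mathcal{X}$ is a diameter-free Ptolemy diagram of type $D_n$ in which every diameter is crossed. Pick a shortest arc $\alpha = \{a, a+k\} \in \mathcal{X}$, where $k \in \{2, \ldots, n-1\}$.

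In the main case $k \geq 3$, the technical crux is a \textbf{Subclaim}: no arc of $\mathcal{X}$ has an endpoint in the short side $R = \{a+1, \ldots, a+k-1\}$ of $\alpha$. The proof is a case analysis on the location of the second endpoint $d$ of such an arc $\{c, d\}$: if $d \in R \cup \{a, a+k\}$ the arc has short length $< k$, contradicting minimality; otherwise $d$ lies on the long side of $\alpha$, the arcs cross, and (Pt1) forces either $\{a, c\}$ or $\{a+k, c\}$ into $\mathcal{X}$ as an arc of length $\leq k-1 < k$, again violating minimality. The $180^\circ$ symmetry of $\mathcal{X}$ gives the corresponding statement for the short side $R'$ of the partner $\alpha'$. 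Hence every arc of $\mathcal{X}$ has its endpoints in $V' := V \setminus (R \cup R')$, which forms the vertex set of a $2m$-gon with $m = n - k + 1 < n$. In this reduced polygon $\alpha$ and $\alpha'$ become edges, and $\mathcal{X}' := \mathcal{X} \setminus \{\alpha, \alpha'\}$ is a diameter-free Ptolemy diagram of type $D_m$ whose diameters are exactly the $V$-diameters not crossed by $\alpha$ or $\alpha'$. Since we assumed every $V$-diameter is crossed by $\mathcal{X}$, the reduced diameters are all crossed by $\mathcal{X}'$, contradicting the induction hypothesis.

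The case $k = 2$ is more subtle because the Subclaim can fail at the single interior vertex $a+1$. If no other arc of $\mathcal{X}$ has $a+1$ as endpoint (equivalently, by symmetry, no arc has $a+n+1$ as endpoint), the same reduction to the $2(n-1)$-gon obtained by deleting $\{a+1, a+n+1\}$ still works. Otherwise, pick $\beta = \{a+1, d\} \in \mathcal{X}$; applying (Pt1) to the crossing of $\alpha$ with $\beta$, and exploiting that no diameter can be forced, constrains $d$ and puts $\{a, d\}$ and $\{a+2, d\}$ into $\mathcal{X}$. My plan is to iterate this forcing until it either produces a new shortest arc with unoccupied interior vertex (permitting reduction) or shows that every arc of $\mathcal{X}$ must have length $2$. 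In the latter case the set $T := \{a : \{a, a+2\} \in \mathcal{X}\}$ is a $+n$-invariant subset of $\mathbb{Z}/2n$ that projects onto all of $\mathbb{Z}/n$ (by the coverage hypothesis); since $+n$-orbits project to single residues, this forces $T = \mathbb{Z}/2n$, putting the crossing length-$2$ arcs $\{0, 2\}$ and $\{1, 3\}$ both in $\mathcal{X}$; but (Pt1) then forces the length-$3$ arc $\{0, 3\}$ (or the diameter $\{0, 3\}$ when $n = 3$), contradicting either the all-length-$2$ assumption or the no-diameter hypothesis. I expect the main obstacle will be making the iteration rigorous in the mixed-length situation, namely showing that the (Pt1)-forcing chain must terminate in one of the two handled configurations.
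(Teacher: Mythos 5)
Your strategy is genuinely different from the paper's and your main reduction is correct as far as it goes: for a shortest arc of length $k\ge 3$ the subclaim, the passage to the $2m$-gon with $m=n-k+1$, the identification of the reduced diameters with the diameters not crossed by $\alpha,\alpha'$, and the appeal to the induction hypothesis all check out (and the no-diameter hypothesis keeps (Pt2)/(Pt3) out of the picture, as you implicitly use). But the $k=2$ case, which you flag yourself, is a genuine gap rather than a routine verification. The dichotomy your forcing iteration is supposed to reach -- either a shortest arc whose interior vertex is unoccupied, or all arcs of length $2$ -- is not a consequence of the Ptolemy closure alone: take $\mathcal{X}=\{\{0,2\},\{1,3\},\{0,3\}\}$ together with its rotation by $180$ degrees in a $2n$-gon with $n\ge 4$. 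This is a diameter-free Ptolemy diagram of type $D$, its minimum arc length is $2$, the interior vertex of every length-$2$ arc is occupied (vertex $1$ by $\{1,3\}$, vertex $2$ by $\{0,2\}$, and rotations), and $\{0,3\}$ has length $3$, so neither horn of your dichotomy holds. Of course this $\mathcal{X}$ has uncrossed diameters, so it does not contradict the lemma; but it shows that your iteration can only terminate in one of your two handled configurations if it exploits the coverage hypothesis (every diameter crossed) in some essential way, and you specify no mechanism for that, nor any termination argument. As written, the inductive step is incomplete precisely when the shortest arc has length $2$ and its interior vertex carries another arc; note this includes the entire case $n=3$, where $k=2$ is the only possibility.

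For comparison, the paper avoids any case split by making the opposite extremal choice and needs no induction: pick $\{i,j\}\in\mathcal{X}$ (with $j\in[i,i+n]$) so that the interval $[i,j]$ is maximal, and show directly that the diameter $\{i,i+n\}$ is uncrossed. Indeed, an arc $\{k,\ell\}\in\mathcal{X}$ crossing $\{i,i+n\}$ either satisfies $[k,\ell]\supsetneq[i,j]$ (contradicting maximality) or crosses $\{i,j\}$, in which case (Pt1) puts $\{k,j\}$ in $\mathcal{X}$; then either $[k,j]\supsetneq[i,j]$, again contradicting maximality, or $\{k,j\}$ crosses the rotated arc $\{i+n,j+n\}\in\mathcal{X}$ and the ``in particular'' clause of (Pt1) forces the diameter $\{j,j+n\}$ into $\mathcal{X}$, contradicting diameter-freeness. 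The cheapest repair of your write-up is to replace the minimal-arc induction by this maximal-arc argument, or at least to settle your $k=2$ case by an extremal argument of this type rather than by the unspecified forcing iteration; your all-length-$2$ endgame, which is correct, would then no longer be needed.
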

\begin{proof}
  Let $\{i,j\}\in\mathcal{X}$ (with $j\in[i,i+n]$) such that the set
  of vertices $[i,j]$ is maximal.  We show that $\{i,i+n\}$ is not
  crossed by any arc in $\mathcal{X}$.

  Suppose that an arc $\{k,\ell\}\in\mathcal{X}$ (with
  $\ell\in[k,k+n]$) crosses $\{i,i+n\}$.  There are two cases to
  distinguish: if $k$, $i$, $j$, $\ell$ come in this order when going
  counterclockwise then $[k, \ell]\supsetneq[i,j]$, contradicting the
  maximality assumption.  On the other hand, if the order is $k$,
  $i$, $\ell$, $j$, then $\{k,\ell\}$ and $\{i,j\}$ cross and
  condition (Pt1) implies that $\{k,j\}\in\mathcal{X}$.  Now, if
  $j\in[k,k+n]$ then $[k,j]\supsetneq[i,j]$ contradicts the
  maximality assumption.  On the other hand, if $j+n\in[k,k+n]$ we
  have that $\{k, j\}$ and $\{j+n,i+n\}$ cross.  Condition (Pt1) then
  forces $\{j,j+n\}\in\mathcal{X}$, contradicting the hypothesis of
  the lemma.
\end{proof}

\begin{Definition}
  Let $\mathcal{X}$ be a Ptolemy diagram of type~$D$ in the $2n$-gon
  $P$.  Then the \Dfn{central region} is a set of vertices and arcs
  of $\mathcal{X}$ constructed as follows:

  Suppose that $\mathcal{X}$ does not contain any diameter.  Then, by
  Lemma~\ref{lem:uncrossed-diameter} there is a diameter $\{i,i+n\}$
  (green or red) which is not crossed by any arc in $\mathcal{X}$.
  Let $\fV$ be the shortest sequence of vertices $( i = i_0, i_1,
  \dots, i_k = i+n )$ such that $i_j$ and $i_{j+1}$ are connected by
  an edge of $P$ or an arc in $\mathcal{X}$ for $0\leq j\leq k-1$.

  If $\mathcal{X}$ does contain a diameter 
  $ \{i,i+n \}$ (green or red) then
  let $\fV$ be the shortest sequence of vertices $(i =
  i_0, i_1, \dots, i_k = i+n )$ such that
  $i_j$ and $i_{j+1}$ are connected by an edge of $P$ or an arc  
  in $\mathcal{X}$ for $0\leq j\leq k-1$ and that contains 
  one end point of every diameter from $\mathcal{X}$.

  The \Dfn{central region} of $\mathcal{X}$ is the polygon containing
  the vertices in $\fV$, their opposite vertices and the edges of $P$ and
  arcs in $\mathcal{X}$ connecting vertices in $\fV$.  We say that
  the edges and arcs $\{ i_j, i_{j+1} \}$ and $\{
  i_j+n, i_{j+1}+n \}$ for $0\leq j\leq k-1$
  \Dfn{bound} the central region.
\end{Definition}

\begin{Lemma}\label{lem:decomposition}
  In a Ptolemy diagram of type~$D$ there is no arc crossing one
  of the edges or arcs bounding the central region.
  Consequently, the diagrams attached to the central region are
  Ptolemy diagrams of type~$A$.
\end{Lemma}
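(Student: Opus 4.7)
The plan is to argue by contradiction. Suppose $\alpha=\{a,b\}\in\mathcal{X}$ crosses a bounding element of the central region. Since edges of $P$ cannot be crossed by any arc, this bounding element must itself be an arc $\beta=\{i_j,i_{j+1}\}\in\mathcal{X}$.

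Apply the appropriate Ptolemy condition to the crossing of $\alpha$ and $\beta$: (Pt1) if both are non-diameters, (Pt2) if both are diameters, or (Pt3) if exactly one is a diameter. In each case those of $\{i_j,a\},\{i_j,b\},\{i_{j+1},a\},\{i_{j+1},b\}$ that are genuine arcs are forced into $\mathcal{X}$. Because either the uncrossed diameter $\{i,i+n\}$ (when $\mathcal{X}$ contains no diameter) or the distinguished diameter in $\mathcal{X}$ (in Case~(ii)) is not crossed by $\alpha$, the endpoints $a$ and $b$ must lie on the same side as the path $\fV$. Renaming if necessary, we may assume the cyclic order along the boundary toward $i+n$ is $i_j, a, i_{j+1}, b$.

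The plan is then to use the derived arc $\{i_j,b\}\in\mathcal{X}$ to construct a strictly shorter sequence from $i$ to $i+n$, contradicting the minimality of $\fV$. If $b=i_m$ for some $m\geq j+2$, the replacement $(i_0,\ldots,i_j,i_m,i_{m+1},\ldots,i_k)$ is immediately shorter. If instead $b$ lies strictly between two consecutive vertices $i_m$ and $i_{m+1}$ of $\fV$, then by cyclic order $\alpha$ also crosses $\{i_m,i_{m+1}\}$, so another application of Ptolemy produces $\{b,i_{m+1}\}\in\mathcal{X}$ and yields the shorter sequence $(i_0,\ldots,i_j,b,i_{m+1},\ldots,i_k)$. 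In Case~(ii), where $\fV$ is additionally required to contain one endpoint of every diameter of $\mathcal{X}$, the rotational symmetry of $\mathcal{X}$ lets any bypassed diameter endpoint be compensated by its antipode in the 180\textdegree-rotated half of the boundary, preserving the diameter-coverage requirement while still strictly reducing the length.

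For the consequence, the absence of any crossing of a bounding element means every arc of $\mathcal{X}$ is confined either to the central region or to a single petal hanging off one bounding arc. Since all diameters of $\mathcal{X}$ lie in the central region by construction, each petal contains only non-diameter arcs; for these, the type~$D$ condition (Pt1) specializes exactly to the type~$A$ Ptolemy condition inside the sub-polygon cut off by the attached bounding arc (with that bounding arc serving as distinguished base edge). The main obstacle in carrying this out is the length-reduction when $b$ is not itself a vertex of $\fV$: one must iterate Ptolemy carefully along successive bounding arcs and, in Case~(ii), verify that the chain of Ptolemy consequences does not force a diameter endpoint out of coverage when $i_{j+1}$ is bypassed.
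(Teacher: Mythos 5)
Your overall strategy (suppose an arc of $\mathcal{X}$ crosses a bounding arc, apply the Ptolemy conditions, contradict the minimality of $\fV$) is the same as the paper's, but two steps on which the whole argument hinges do not go through as written. First, the strict length reduction fails in the very case you flag: if $b\notin\fV$ lies strictly inside the \emph{adjacent} bounding arc $\{i_{j+1},i_{j+2}\}$ (i.e.\ $m=j+1$), your replacement $(i_0,\dots,i_j,b,i_{j+2},\dots,i_k)$ has exactly the same number of vertices as $\fV$, so minimality is not violated and no contradiction results. The paper avoids inserting the outside vertex at all: from the crossing of the derived arc $\{i_j,b\}$ (the paper's $\{i,\ell\}$) with the bounding arc $\{r,s\}$ containing $b$, condition (Pt1) yields the chord $\{i_j,s\}$ joining two vertices of $\fV$ directly, and replacing the whole stretch of $\fV$ from $i_j$ to $s$ by this single chord always drops at least the vertex $i_{j+1}$, hence is strictly shorter even when $r=i_{j+1}$.

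Second, your treatment of Case~(ii) is not a proof at two points. The assertion that $\alpha$ does not cross the distinguished diameter $\{i,i+n\}\in\mathcal{X}$ is unjustified (arcs of a type~$D$ Ptolemy diagram may well cross diameters of $\mathcal{X}$), and the claim that a bypassed diameter endpoint is ``compensated by its antipode'' is not an argument: the antipode lies in the opposite half of the polygon, is not a vertex of your shortened sequence, and cannot be inserted without connecting arcs that need not exist -- so the requirement that $\fV$ contain an endpoint of every diameter may simply fail for the shortened sequence. What actually makes both issues disappear, and what the paper does, is a preliminary case split on the crossing arc: if it is a diameter, or is crossed by a diameter of $\mathcal{X}$, then (Pt3) (resp.\ the diameter itself) produces a diameter of $\mathcal{X}$ whose endpoint lies strictly inside the bounding arc, so neither of its endpoints can belong to $\fV$, contradicting the coverage requirement (or the no-diameter hypothesis in Case~(i)) outright; in the remaining case the crossing arc is crossed by no diameter, whence no diameter of $\mathcal{X}$ has an endpoint in the bypassed stretch of $\fV$ and the coverage condition is preserved automatically by the shortened sequence. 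Without this case analysis, the minimality-plus-coverage contradiction you aim for is not established, so the proposal has a genuine gap (which you yourself acknowledge in your closing remarks). The deduction of the ``consequently'' part from the no-crossing statement is fine and agrees with the paper.
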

\begin{proof}
  Let $\fV$ be the set of vertices on the boundary of the central
  region.  Let $i$ and $j$ be two vertices in $\fV$ and let
  $\{i,j\}\in\mathcal{X}$ (with $j\in[i,i+n]$).  Suppose that the arc
  $\{i,j\}$ is crossed by an arc $\{k,\ell\}\in\mathcal{X}$ (with
  $\ell\in[k,k+n]$).  Let us assume without loss of generality that
  $k\in[i,j]$.  We show that this forces that a diameter
  $\{k,k+n\}\in\mathcal{X}$, so the arc $\{i,j\}$ cannot be on the
  boundary of the central region.

  If $\{k,\ell\}$ is a diameter there is nothing to show, so we
  suppose that this is not the case.  If $\{k,\ell\}$ is crossed by a
  diameter then condition (Pt3) implies that
  $\{k,k+n\}\in\mathcal{X}$ and we are done.  

  Suppose now that $\{k,\ell\}$ is not crossed by a diameter.  In
  particular, $\{i,j\}$ is not a diameter either.  Condition (Pt1)
  then implies that $\{i,\ell\}\in\mathcal{X}$.  It follows that
  $\ell\not\in\fV$ because otherwise the sequence $\fV$ is not minimal:
  replacing the vertices in $[i,\ell]$ by just $i$ and $\ell$ yields
  a shorter sequence (at least the vertex $j$ does not appear).

  However, if $\ell\not\in\fV$, there must be an arc
  $\{r,s\}\in\mathcal{X}$ with $r,s\in\fV$ that crosses $\{k,\ell\}$
  and such that $i,j,r,s$ appear in this order when going around the
  polygon counterclockwise.  Since this arc cannot be a diameter by
  assumption, condition (Pt1) implies that $\{i,s\}\in\mathcal{X}$,
  which again violates the minimality of $\fV$.

  The fact that the diagrams attached to the central region are
  Ptolemy diagrams of type~$A$ follows, because such a component
  cannot contain any diameters and condition (Pt1) coincides with the
  Ptolemy condition in type~$A$.
\end{proof}

\begin{Proposition}\label{prop:decomposition}
  Let 
  $$
  \C(y)=\sum_{k\geq0} \#\{\text{central regions with $2k+2$ bounding edges}\} y^k
  $$
  be the generating function for central regions.  Then the
  generating function for Ptolemy diagrams of type~$D$ equals
  $$
  \P_D(y) = y\P_A^\prime(y)\C\big(\P_A(y)\big).
  $$
\end{Proposition}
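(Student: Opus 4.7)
The plan is to convert the decomposition of Lemma~\ref{lem:decomposition} into a generating function identity, being careful about the $180^\circ$ rotational symmetry. First I would record the decomposition of a Ptolemy diagram $\mathcal{X}$ of type~$D$ as a bijection onto pairs consisting of a central region $R$ together with Ptolemy diagrams of type~$A$ glued along the bounding edges of $R$ (each glued diagram taking the corresponding bounding edge as its distinguished base edge). That such gluing conversely yields a Ptolemy diagram of type~$D$ is routine: the conditions (Pt1)--(Pt3) involving arcs lying in a single glued piece reduce to the Ptolemy condition for type~$A$, while Lemma~\ref{lem:decomposition} rules out the conditions concerning arcs from different pieces since no such arc crosses any bounding edge of~$R$.

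Next the $180^\circ$ rotational symmetry of $\mathcal{X}$ (and hence of $R$) forces the $2k+2$ bounding edges of $R$ to fall into $k+1$ orbits of size two, and forces the glued type~$A$ diagrams on paired bounding edges to be rotations of one another; so the decomposition data is equivalent to a central region with $2k+2$ bounding edges and an independent choice of Ptolemy diagram of type~$A$ on each of the $k+1$ bounding edges in a fundamental domain for the rotation. Writing $N_1,\dots,N_{k+1}$ for the sizes of those glued pieces, one has $\sum N_i = n$ since the non-base edges in a fundamental domain are in bijection with the $n$ upper boundary edges of the $2n$-gon. For fixed $R$ this contributes $\P_A(y)^{k+1}$ and suggests a naive formula $\P_A(y)\,\C(\P_A(y))$.

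The final step is to explain why one of the $k+1$ factors of $\P_A(y)$ gets replaced by $y\P_A^\prime(y) = \sum_{N\ge 1}N p^A_N y^N$, which enumerates type~$A$ Ptolemy diagrams together with a marked non-base edge. The central region $R$ carries intrinsic rooting data from its definition: the starting diameter $\{i,i+n\}$ (a chosen one in $\mathcal{X}$, or a chosen uncrossed one when $\mathcal{X}$ has no diameters) canonically selects the initial vertex $i_0$ of the sequence $\fV$, hence a distinguished bounding edge in the fundamental domain, and one may take the marked non-base edge of the type~$A$ diagram attached there to be the one incident to $i_0$. Absorbing this choice into $y\P_A^\prime(y)$ and summing over $k$ yields
\begin{equation*}
\P_D(y) = \sum_{k\ge 0} c_k\, y\P_A^\prime(y)\,\P_A(y)^k = y\P_A^\prime(y)\,\C(\P_A(y)).
\end{equation*}
The main obstacle will be this last step: one must show that the ambiguities in the definition of the central region (the several possible choices of starting diameter when $\mathcal{X}$ contains more than one, or of an uncrossed diameter otherwise) combine with the marking data encoded in $y\P_A^\prime(y)$ exactly so that each Ptolemy diagram of type~$D$ is counted exactly once on the right-hand side.
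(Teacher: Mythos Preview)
Your decomposition into a central region with glued type~$A$ Ptolemy diagrams is right, and the naive count $\P_A(y)^{k+1}$ for the glued pieces is correct. The gap is in the final step, where you misidentify the source of the factor $y\P_A'(y)$.

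The Ptolemy diagrams of type~$D$ being counted are \emph{labelled}: the $2n$-gon has vertices $0,1,\dots,2n-1$, or equivalently carries a distinguished boundary edge. It is this distinguished edge of the ambient $2n$-gon that supplies the marking. In the paper's argument, the glued piece containing this distinguished edge is declared to be $\mathcal{X}_0$, and within $\mathcal{X}_0$ the distinguished edge of the $2n$-gon is precisely the marked non-base edge $\fd$; conversely, the bounding edge to which $\mathcal{X}_0$ is attached becomes the root of the central region. This sets up a clean bijection, and the factor $y\P_A'(y)=\sum_{N\ge1} N p^A_N y^N$ records exactly the $N_0$ possible positions of the distinguished edge inside~$\mathcal{X}_0$.

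Your alternative, extracting the marking from the starting diameter $\{i,i+n\}$ used in the definition of the central region, cannot work. First, that choice does not give a \emph{free} marking: you propose to mark the fixed boundary edge incident to~$i_0$, which produces only one rooted $\mathcal{X}$ per (rooted~$R$, list) rather than $N_0$ of them. Second, the attempt to compensate via the ambiguity in choosing the starting diameter fails because the number of admissible starting diameters depends only on the central region (e.g.\ it is $2(k+1)$ for a type~(I) region with all diameters present), whereas the factor required is $N_0$, the size of the piece containing the base edge; these quantities are unrelated. In short, you never use the labelling of the $2n$-gon on the left-hand side, so there is no way to recover it on the right.
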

\begin{proof}
  Suppose we are given a central region with $2k+2$ bounding edges.
  We can then construct every Ptolemy diagram of type~$D$ with this
  central region in a unique way from a list $(\mathcal{X}_0, 
  \mathcal{X}_1,\dots,
  \mathcal{X}_k)$ of Ptolemy diagrams of type~$A$ together with an additional
  distinguished edge $\fd$ in $\mathcal{X}_0$.  As we will see we have to
  insist that $\fd$ is different from the distinguished base edge of
  $\mathcal{X}_0$ except if $\mathcal{X}_0$ is the degenerate Ptolemy 
  diagram of type~$A$.

  Namely, to construct a diagram from the given data we glue in
  clockwise order $\mathcal{X}_0, \mathcal{X}_1,\dots, \mathcal{X}_k, 
  \mathcal{X}_0, \mathcal{X}_1, \dots,
  \mathcal{X}_k$ onto the bounding edges of the central region along their
  respective distinguished base edges.  Finally, we declare the edge
  in the resulting diagram that corresponds to $\fd$ in one copy of
  $\mathcal{X}_0$ in $\fD$ to be the distinguished base edge of 
  $\mathcal{X}$. Because of the symmetry of the resulting diagram it 
  does not matter which of the two copies of $\mathcal{X}_0$ we select.

  We now notice that the generating function for Ptolemy diagrams of
  type~$A$ with an additional distinguished edge satisfying the
  condition mentioned above is $y \P_A^\prime(y)$, where the prime
  denotes the derivative.  Thus, the generating function for Ptolemy
  diagrams of type~$D$ with a central region of size $2k+2$ equals
  \[
  \#\{\text{central regions with $2k+2$ bounding edges}\}\; y
  \P_A^\prime(y) \P_A(y)^k.
  \]
  Summing over all $k$ we obtain the claim.

  We remark that we defined the degenerate Ptolemy diagram of type
  $D$ with two vertices such as to make this decomposition agree with
  Condition (Pt1).
\end{proof}

We now distinguish three kinds of central regions occuring in Ptolemy
diagrams of type~$D$.  Although not imperative for the rest of this
article these are chosen in a way such that the set of diagrams of
each kind is closed under the operator $\nc$ and under rotation.  Note that
the central region of a Ptolemy diagram of type $D$ is itself a
Ptolemy diagram of type $D$.  Thus we actually define three kinds of
Ptolemy diagrams, according to the kind of their central regions:

\begin{Definition}
  Let $\mathcal{X}$ be a Ptolemy diagram of type~$D$.  We say that two
  diameters in $\mathcal{X}$ are \Dfn{paired} if they connect the same two
  vertices (and are thus of different colour).

  Every Ptolemy diagram $\mathcal{X}$ of type $D$ 
  is of one of the following three kinds:
  \begin{enumerate}
    \renewcommand{\labelenumi}{(\Roman{enumi})}
  \item All diameters (if any) in $\mathcal{X}$ are paired.

  \item All diameters in $\mathcal{X}$ are of the same colour and there 
  are at least two diameters in $\mathcal{X}$.

  \item Not all diameters in $\mathcal{X}$ are paired and if there are
    several diameters, both colours occur.
  \end{enumerate}
\end{Definition}

We remark that according to the construction in the proof of
Proposition~\ref{prop:decomposition} the type of a Ptolemy diagram is
determined by its central region.  In
Figures~\ref{fig:typeI}--\ref{fig:typeIII} the central regions with
few bounding edges are listed.

In the remainder of this section we describe each of the three kinds
of central regions precisely and determine in each case 
the corresponding generating function.

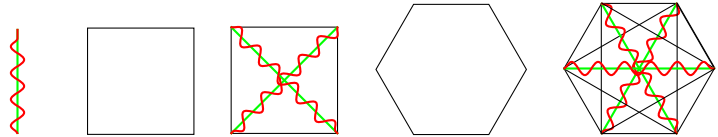
\begin{figure}
  \begin{tikzpicture}[auto]
    \draw[thick,green] (0,0) to (0,1.4);
    \draw[thick,decorate,decoration=snake,red] (0,0) to (0,1.4);
  \end{tikzpicture}
  \qquad
  \begin{tikzpicture}[auto]
    \node[name=s, shape=regular polygon, regular polygon sides=4,
    minimum size=2cm, draw] at (0,0) {};
  \end{tikzpicture}
  \quad
  \begin{tikzpicture}[auto]
    \node[name=s, shape=regular polygon, regular polygon sides=4,
    minimum size=2cm, draw] at (0,0) {};
    \draw[thick,green] (s.corner 1) to (s.corner 3);
    \draw[thick,decorate,decoration=snake,red] (s.corner 1) to (s.corner 3);
    \draw[thick,green] (s.corner 2) to (s.corner 4);
    \draw[thick,decorate,decoration=snake,red] (s.corner 2) to (s.corner 4);
  \end{tikzpicture}
  \quad
  \begin{tikzpicture}[auto]
    \node[name=s, shape=regular polygon, regular polygon sides=6,
    minimum size=2cm, draw] at (0,0) {};
  \end{tikzpicture}
  \quad
  \begin{tikzpicture}[auto]
    \node[name=s, shape=regular polygon, regular polygon sides=6,
    minimum size=2cm, draw] at (2,0) {};
    \draw (s.corner 1) to (s.corner 3);
    \draw[thick,green] (s.corner 1) to (s.corner 4);
    \draw[thick,decorate,decoration=snake,red] (s.corner 1) to (s.corner 4);
    \draw (s.corner 1) to (s.corner 5);
    \draw (s.corner 1) to (s.corner 6);
    \draw (s.corner 2) to (s.corner 4);
    \draw[thick,green] (s.corner 2) to (s.corner 5);
    \draw[thick,decorate,decoration=snake,red] (s.corner 2) to (s.corner 5);
    \draw (s.corner 2) to (s.corner 6);
    \draw (s.corner 3) to (s.corner 5);
    \draw[thick,green] (s.corner 3) to (s.corner 6);
    \draw[thick,decorate,decoration=snake,red] (s.corner 3) to (s.corner 6);
    \draw (s.corner 4) to (s.corner 6);
  \end{tikzpicture}
  \caption{The central regions of type (I) with at most 6 bounding
    edges.}
  \label{fig:typeI}
\end{figure}

\begin{figure}
  \begin{tikzpicture}[auto]
    \node[name=s, shape=regular polygon, regular polygon sides=4,
    minimum size=2cm, draw] at (0,0) {};
    \draw[thick,green] (s.corner 1) to (s.corner 3);
    \draw[thick,green] (s.corner 2) to (s.corner 4);
  \end{tikzpicture}
  \quad
  \begin{tikzpicture}[auto]
    \node[name=s, shape=regular polygon, regular polygon sides=6,
    minimum size=2cm, draw] at (2,0) {};
    \draw[thick,green] (s.corner 2) to (s.corner 5);
    \draw[thick,green] (s.corner 3) to (s.corner 6);
  \end{tikzpicture}
  \quad
  \begin{tikzpicture}[auto]
    \node[name=s, shape=regular polygon, regular polygon sides=6,
    minimum size=2cm, draw] at (2,0) {};
    \draw[thick,green] (s.corner 1) to (s.corner 4);
    \draw[thick,green] (s.corner 3) to (s.corner 6);
  \end{tikzpicture}
  \quad
  \begin{tikzpicture}[auto]
    \node[name=s, shape=regular polygon, regular polygon sides=6,
    minimum size=2cm, draw] at (2,0) {};
    \draw[thick,green] (s.corner 1) to (s.corner 4);
    \draw[thick,green] (s.corner 2) to (s.corner 5);
  \end{tikzpicture}

  \bigskip
  \begin{tikzpicture}[auto]
    \node[name=s, shape=regular polygon, regular polygon sides=6,
    minimum size=2cm, draw] at (2,0) {};
    \draw[thick,green] (s.corner 1) to (s.corner 4);
    \draw[thick,green] (s.corner 2) to (s.corner 5);
    \draw[thick,green] (s.corner 3) to (s.corner 6);
  \end{tikzpicture}
  \quad
  \begin{tikzpicture}[auto]
    \node[name=s, shape=regular polygon, regular polygon sides=6,
    minimum size=2cm, draw] at (2,0) {};
    \draw[thick,green] (s.corner 1) to (s.corner 4);
    \draw[thick,green] (s.corner 2) to (s.corner 5);
    \draw[thick,green] (s.corner 3) to (s.corner 6);
    \draw (s.corner 2) to (s.corner 4);
    \draw (s.corner 1) to (s.corner 5);
  \end{tikzpicture}
  \quad
  \begin{tikzpicture}[auto]
    \node[name=s, shape=regular polygon, regular polygon sides=6,
    minimum size=2cm, draw] at (2,0) {};
    \draw[thick,green] (s.corner 1) to (s.corner 4);
    \draw[thick,green] (s.corner 2) to (s.corner 5);
    \draw[thick,green] (s.corner 3) to (s.corner 6);
    \draw (s.corner 1) to (s.corner 3);
    \draw (s.corner 4) to (s.corner 6);
  \end{tikzpicture}
  \quad
  \begin{tikzpicture}[auto]
    \node[name=s, shape=regular polygon, regular polygon sides=6,
    minimum size=2cm, draw] at (2,0) {};
    \draw[thick,green] (s.corner 1) to (s.corner 4);
    \draw[thick,green] (s.corner 2) to (s.corner 5);
    \draw[thick,green] (s.corner 3) to (s.corner 6);
    \draw (s.corner 2) to (s.corner 6);
    \draw (s.corner 3) to (s.corner 5);
  \end{tikzpicture}
  \caption{The central regions of type (II) with green diameters with
    at most 6 bounding edges.}
  \label{fig:typeII}
\end{figure}
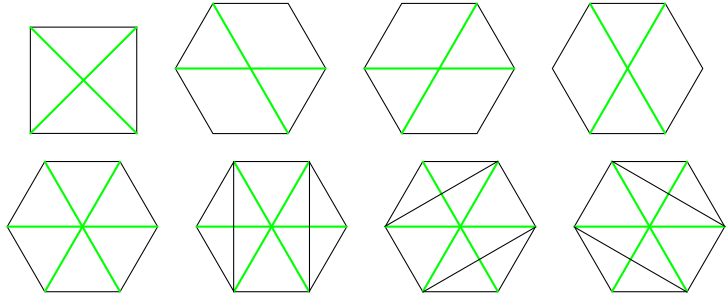

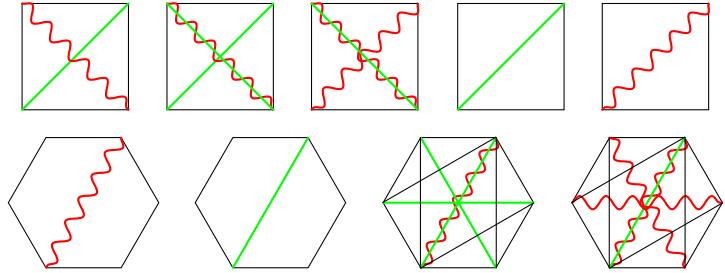
\begin{figure}
  \begin{tikzpicture}[auto]
    \node[name=s, shape=regular polygon, regular polygon sides=4,
    minimum size=2cm, draw] at (0,0) {};
    \draw[thick,green] (s.corner 1) to (s.corner 3);
    \draw[thick,decorate,decoration=snake,red] (s.corner 2) to (s.corner 4);
  \end{tikzpicture}
  \quad
  \begin{tikzpicture}[auto]
    \node[name=s, shape=regular polygon, regular polygon sides=4,
    minimum size=2cm, draw] at (0,0) {};
    \draw[thick,green] (s.corner 1) to (s.corner 3);
    \draw[thick,decorate,decoration=snake,red] (s.corner 2) to (s.corner 4);
    \draw[thick,green] (s.corner 2) to (s.corner 4);
  \end{tikzpicture}
  \quad
  \begin{tikzpicture}[auto]
    \node[name=s, shape=regular polygon, regular polygon sides=4,
    minimum size=2cm, draw] at (0,0) {};
    \draw[thick,decorate,decoration=snake,red] (s.corner 1) to (s.corner 3);
    \draw[thick,decorate,decoration=snake,red] (s.corner 2) to (s.corner 4);
    \draw[thick,green] (s.corner 2) to (s.corner 4);
  \end{tikzpicture}
  \quad
  \begin{tikzpicture}[auto]
    \node[name=s, shape=regular polygon, regular polygon sides=4,
    minimum size=2cm, draw] at (0,0) {};
    \draw[thick,green] (s.corner 1) to (s.corner 3);
  \end{tikzpicture}
  \quad
  \begin{tikzpicture}[auto]
    \node[name=s, shape=regular polygon, regular polygon sides=4,
    minimum size=2cm, draw] at (0,0) {};
    \draw[thick,decorate,decoration=snake,red] (s.corner 1) to (s.corner 3);
  \end{tikzpicture}

  \bigskip
  \begin{tikzpicture}[auto]
    \node[name=s, shape=regular polygon, regular polygon sides=6,
    minimum size=2cm, draw] at (0,0) {};
    \draw[thick,decorate,decoration=snake,red] (s.corner 1) to (s.corner 4);
  \end{tikzpicture}
  \quad
  \begin{tikzpicture}[auto]
    \node[name=s, shape=regular polygon, regular polygon sides=6,
    minimum size=2cm, draw] at (0,0) {};
    \draw[thick,green] (s.corner 1) to (s.corner 4);
  \end{tikzpicture}
  \quad
  \begin{tikzpicture}[auto]
    \node[name=s, shape=regular polygon, regular polygon sides=6,
    minimum size=2cm, draw] at (0,0) {};
    \draw[thick,decorate,decoration=snake,red] (s.corner 1) to (s.corner 4);
    \draw[thick,green] (s.corner 1) to (s.corner 4);
    \draw[thick,green] (s.corner 2) to (s.corner 5);
    \draw[thick,green] (s.corner 3) to (s.corner 6);
    \draw (s.corner 1) to (s.corner 3);
    \draw (s.corner 2) to (s.corner 4);
    \draw (s.corner 4) to (s.corner 6);
    \draw (s.corner 1) to (s.corner 5);
  \end{tikzpicture}
  \quad
  \begin{tikzpicture}[auto]
    \node[name=s, shape=regular polygon, regular polygon sides=6,
    minimum size=2cm, draw] at (0,0) {};
    \draw[thick,decorate,decoration=snake,red] (s.corner 1) to (s.corner 4);
    \draw[thick,green] (s.corner 1) to (s.corner 4);
    \draw[thick,decorate,decoration=snake,red] (s.corner 2) to (s.corner 5);
    \draw[thick,decorate,decoration=snake,red] (s.corner 3) to (s.corner 6);
    \draw (s.corner 1) to (s.corner 3);
    \draw (s.corner 2) to (s.corner 4);
    \draw (s.corner 4) to (s.corner 6);
    \draw (s.corner 1) to (s.corner 5);
  \end{tikzpicture}
  \caption{The central regions of type (III) with at most 6 bounding
    edges, up to rotation.}
  \label{fig:typeIII}
\end{figure}

\subsection{Ptolemy diagrams of the first kind.}
Let $\mathcal{X}$ be a central region with $2k+2$ bounding edges.  
If all
diameters in $\mathcal{X}$ are paired then by Condition (Pt2)
all arcs connecting end points of the diameters in $\mathcal{X}$ 
are also in $\mathcal{X}$.  
In particular, since $\mathcal{X}$ is a central region, if $\mathcal{X}$
contains a diameter then all $2k+2$ possible diameters ($k+1$ of each
colour) must be present. 

Finally, again because $\mathcal{X}$ is a central region, if 
$\mathcal{X}$ does not
contain a diameter then $\mathcal{X}$ is a polygon without any
(internal)  
arcs.  In this case $k$ must be greater than zero since the 
central region with only two vertices contains both diameters.

Thus, for $k=0$ there is one central region of the first kind while
for $k\geq1$ there are two.  In other words, the generating function
for central regions of the first kind equals
\begin{equation}
  \label{eq:1}
  \C_{D,I}(y) = \frac{1+y}{1-y}\tag{I}
  = 1 + 2y + 2y^2+2y^3+2y^4+2y^5+\dots\notag,
\end{equation}

Clearly, for $k\geq 1$ the operator $\nc$ maps the central region of
the first kind without diameters to the central region of the first
kind with all diameters and vice versa.  The degenerate central
region with two vertices is a fixed point under the operation of
$\nc$.

\subsection{Ptolemy diagrams of the second kind.}
We now consider the case that there are at least two diameters in
$\mathcal{X}$ and all of them are of the same colour.  Suppose that there is
a non-diameter arc $\{i, j \}$ crossing a
diameter.  Let us traverse the polygon starting at $i$ and
ending at $j$, where the direction of travel is chosen in such
a way that $j$ is encountered before $j+n$.  Let
$\fB$ be the set of end points of diameters in $\mathcal{X}$, including
$i$ and $j$ encountered in this way.  Then
Condition (Pt1) and Condition (Pt3) imply
that precisely the arcs connecting vertices in $\fB$ are present
in $\mathcal{X}$.
\[
  \begin{tikzpicture}[auto]
    \node[name=s, shape=regular polygon, regular polygon sides=22, minimum size=4cm, draw] {}; 
    \draw[shift=(s.corner 6)]  node[left] {$i$};
    \draw[shift=(s.corner 22)] node[right] {$j$};
    \draw[shift=(s.corner 17)]  node[right] {$i+n$};
    \draw[shift=(s.corner 11)]  node[below] {$j+n$};
    \draw (s.corner 4) to (s.corner 15);
    \draw[thick, green] (s.corner 4) to (s.corner 15);
    \draw[thick, green] (s.corner 1) to (s.corner 12);
    \draw[thick] (s.corner 6) to node[near start, below right=20pt] {$~$} (s.corner 22);
    \draw[thick] (s.corner 17) to (s.corner 11);
    \draw[thick,dotted] (s.corner 4) to (s.corner 6);
    \draw[thick,dotted] (s.corner 4) to (s.corner 22);
    \draw[thick,dotted] (s.corner 4) to (s.corner 1);
    \draw[thick,dotted] (s.corner 1) to (s.corner 6);
    \draw[thick,dotted] (s.corner 15) to (s.corner 11);
    \draw[thick,dotted] (s.corner 15) to (s.corner 17);
    \draw[thick,dotted] (s.corner 15) to (s.corner 12);
    \draw[thick,dotted] (s.corner 12) to (s.corner 17);
    \draw[thick,dashed, green] (s.corner 6) to (s.corner 17);
    \draw[thick,dashed, green] (s.corner 22) to (s.corner 11);
  \end{tikzpicture} 
\]

Furthermore, by Condition (Pt3) and
Condition (Pt1) there are at least two diameters in
$\mathcal{X}$ which are not crossed by another diagonal, and only 
arcs connecting end points of diameters in $\mathcal{X}$ can cross a 
diameter in $\mathcal{X}$.  

It remains to derive the generating function of central regions of
the second kind.  Such a central region is determined by a selection
of at least two diameters that are not crossed by any arcs and a
subset of these diameters that are crossed by arcs.  However,
end points of diameters that are chosen to be crossed by arcs
can only be neighboured by end points of other diameters in the
selected set.  Therefore, the central regions can be encoded by words
from an alphabet with three letters $o$ (not selected), $l$
(diameter) and $x$ (crossed diameter) such that $o$ and $x$ are not
consecutive, where we consider the first and the last letter of the
word adjacent and $l$ occurs at least twice.

One way to perform the computation of the generating function
\begin{equation*}
  \W(y)=\sum_{k\geq2} \#\{\text{words of length $k$}\} y^k
\end{equation*}
of such words is as follows: let $\W_o$ (respectively $\W_x$) be the
set of words that do not contain the sequence $ox$ or $xo$ and end in
$o$ (respectively $x$).  Furthermore, let $\W^\prime$ be the set of
all words that do not contain the sequence $ox$ or $xo$.  Then we
have the following combinatorial equations, omitting parenthesis
around singleton sets to improve readability:
\begin{align*}
  \W_o &= o + \W^\prime \cdot l\cdot o + \W_o\cdot o\\
  \W_x &= x + \W^\prime \cdot l\cdot x + \W_x\cdot x\\
  \W^\prime &= \emptyset + (\emptyset + \W^\prime\cdot l)\cdot(l + o
  + x) + \W_o\cdot(l+o) + \W_x\cdot(l+x)\\
  \W &= l \cdot \W^\prime \cdot l \cdot (\emptyset + o^+ +  x^+) \\
  &\phantom{=} + o^+\cdot l \cdot \W^\prime \cdot l \cdot o^* %
  + x^+\cdot l \cdot \W^\prime \cdot l \cdot x^*
\end{align*}
In these equations $+$ denotes the union of sets, $\F\cdot\G$ is the
set of all words obtained by appending a word from $\G$ to a word
from $\F$, $\emptyset$ is the empty word, $a^*$ denotes the set of
words composed of the letter $a$ only, including the empty word, and
$a^+$ equals $a^*$ without the empty word.

Passing to generating functions (we assign every letter the weight
$y$) and solving the system of equations we obtain $\W^\prime(y) =
\frac{1+y}{1-2y-y^2}$ and $\W(y) =
\frac{y^2(1+y)(1+2y-y^2)}{(1-y)^2(1-2y-y^2)}$.  Since we have to
choose one of two colours for the diameters we conclude that the
generating function for central regions of the second kind equals
\begin{align*}
  \label{eq:2}
  \C_{D,II} &= 2\frac{\W(y)}{y} = 2\frac{y(1+y)(1+2y-y^2)}{(1-y)^2(1-2y-y^2)}
  \tag{II}\\
  &=2y  + 14y^2  + 50y^3  + 142y^4 + 370y^5 + \dots\notag
\end{align*}

The action of the operator $\nc$ is most easily explained by its
effect on the corresponding words: if $\mathcal{X}$ is a central region
corresponding to a word $w$ then the word corresponding $\nc\mathcal{X}$ 
is obtained from $w$ by interchanging the letters $o$ and $x$.

\subsection{Ptolemy diagrams of the third kind.}
Suppose first that not all diameters in $\mathcal{X}$ are paired and both
colours occur.  In this case, if there is a paired diameter in 
$\mathcal{X}$ then all other diameters must be of the same colour.

Namely, let $\fa=\{i, i+n \}$ be a paired diameter in
$\mathcal{X}$, and let 
$\fb = \{ j, j+n \}$ and $\fc$ be unpaired
diameters of different colours, say $\fb$ red and $\fc$ green.  Then
Condition (Pt2) applied to $\fa$ and $\fb$ implies that
one of $\{ i, j \}$ and $\{ i, j+n \}$
crosses $\fc$.  But then Condition (Pt3) applied to this
crossing forces the presence of the green diameter pairing $\fb$, a
contradiction.

Furthermore, if there is a paired diameter in $\mathcal{X}$ 
and all other diameters are of the same colour then on both 
sides of the paired diameter the end points of all diameters 
are all connected by
Condition (Pt2) and Condition (Pt1).
However, the paired diameter itself is not crossed by any
non-diameter arcs, since Condition (Pt3) would then
force all diameters to be paired.

Otherwise, if there is no paired diameter then $\mathcal{X}$ 
contains at most
two diameters.  Suppose on the contrary that $\fa=\{i,i+n \}_r$ is 
a red unpaired diameter and $\fb = \{j,j+n \}_g$ and $\fc$ are 
both unpaired green diameters.  Then
Condition (Pt2) applied to $\fa$ and $\fb$ forces the
presence of an arc $\fd$ crossing $\fc$.  In turn,
Condition (Pt3) applied to $\fc$ and $\fd$ implies that
$\fa$ must be paired, contradicting our assumption.

As in Ptolemy diagrams of the first and second kind no other
arcs can cross a diameter.

Thus, central regions of the third kind having one paired diameter or
a single (unpaired) diameter can be constructed by choosing a colour
for the unpaired diameter(s) and one diameter in a polygon with at
least four edges.  Finally, there are two additional central region
with four vertices and two unpaired diameters of different colours:
\begin{align*}
  \label{eq:3}
  \C_{D,III} &= 2y + 4 \sum_{k\geq 1} (k+1) y^k 
  =2y + 4 \frac{2y-y^2}{\big(1-y\big)^2}\tag{III}\\
  &=10y + 12y^2 + 16y^3 + 20y^4 + 24y^5 +\dots\notag
\end{align*}

The operator $\nc$ maps the two central regions with two unpaired
diameters onto each other.  It maps the central region with a single
unpaired diameter, say red, to the central region that has this
diameter paired and all other diameters of red colour.

\subsection{The grand total.}

Combining Equations~\eqref{eq:1}--\eqref{eq:3} and applying
Proposition~\ref{prop:decomposition} we obtain the generating function for Ptolemy
diagrams of type~$D$:
\begin{align}
  \label{eq:total}\notag
  \P_D(y) &=
  y\P_A^\prime(y)\frac{1+12\P_A(y)-\P_A^2(y)-2\P_A^3(y)}{1-2\P_A(y)-\P_A^2(y)}\\
  &=y + 16y^2 + 82y^3 + 500y^4 + 3084y^5 + 19400y^6 +\dots\notag
\end{align}
It follows from the algebraicity of $\P_A(y)$ that also $\P_D(y)$ is
an algebraic generating function.  However, the equation is not
particularly appealing:
\begin{multline*}
  (4y^2 - 47y^2 - 48y + 8)\P^3(y) \\
  - 2(y - 2)(4y^3 - 47y^2 - 48y + 8)\P^2(y)\\
  +(4y^5 - 99y^4 + 628y^3 - 246y^2 - 240y + 32)\P(y)\\
  - 2y(20y^3 - 319y^2 + 152y + 16)=0
\end{multline*}


\end{document}